\theoremstyle{plain}
\newtheorem{theorem}{Theorem}[section]
\newtheorem{proposition}[theorem]{Proposition}
\newtheorem{lemma}[theorem]{Lemma}
\newtheorem*{lemma*}{Lemma}
\theoremstyle{definition}
\newtheorem{claim}[theorem]{Claim}
\newtheorem{definition}[theorem]{Definition}
\newtheorem{remark}[theorem]{Remark}
\newtheorem*{theorem**}{Theorem\theoremnum}
\newenvironment{theorem*}[1][]{%
  \edef\theoremnum{\if\relax\detokenize{#1}\relax\else~#1\fi}
  \begin{theorem**}
}{%
  \end{theorem**}
}  
\newtheorem*{proposition**}{Proposition\theoremnum}
\newenvironment{proposition*}[1][]{%
  \edef\theoremnum{\if\relax\detokenize{#1}\relax\else~#1\fi}
  \begin{proposition**}
}{%
  \end{proposition**}
}  
\newtheorem*{cor**}{Corollary\theoremnum}
\newenvironment{cor*}[1][]{%
  \edef\theoremnum{\if\relax\detokenize{#1}\relax\else~#1\fi}
  \begin{cor**}
}{%
  \end{cor**}
}  
\newcommand{\addQEDstyle}[2]{\AtBeginEnvironment{#1}{\pushQED{\qed}\renewcommand{\qedsymbol}{#2}}\AtEndEnvironment{#1}{\popQED}}
\newenvironment{proofclaim}{%
  \begin{proof}[Proof of the claim]
  \renewcommand{\qedsymbol}{$\diamond$}%
}{%
  \end{proof}
}
\DeclareMathOperator{\arcsinh}{arcsinh}
\DeclareMathOperator{\area}{area}
\DeclareMathOperator{\sys}{sys}
\DeclareMathOperator{\arcch}{arccosh}
\DeclareMathOperator{\teich}{Teich}
\newcommand{\R}{\mathbb{R}}
\newcommand{\Hh}{\mathbb{H}}
\newcommand\blfootnote[1]{%
  \begingroup
  \renewcommand\thefootnote{}\footnote{#1}%
  \addtocounter{footnote}{-1}%
  \endgroup
}
\newtheorem{mainthm}{Theorem}
\title{Bounding shears of spiralling triangulations on hyperbolic surfaces}
\author{Marie Abadie \hspace{0.1 cm}\orcidlink{0000-0002-8951-4643}}
\email{marie.abadie@uni.lu}
\date{\today}
\begin{document}
\blfootnote{\noindent This work was partially supported by the Luxembourg National Research Fund OPEN grant O19/13865598.}

\begin{abstract}
    We show that all hyperbolic surfaces admit an ideal triangulation with bounded shear parameters. This upper bound depends logarithmically on the topology of the surface.
\end{abstract}

\maketitle

\section{Introduction}
A theorem of Bers states that for any hyperbolic surface, there exists a constant, depending only on the topology of the surface, that bounds from above the lengths of curves in its shortest pants decomposition~\cite{Bers1974,Bers1985}. Much work has been put into quantifiying these constants, not only for closed hyperbolic surfaces but also for more general cases involving surfaces with punctures or geodesic boundary components\cite{Buser, Buser1992,BalacheffParlier2012,BalacheffParlier2012ShortLD}. More precisely, for integers $g,n$ with $2g - 2 + n > 0$, there exists a constant $B_{g,n}$, called the Bers constant, such that for any hyperbolic surface $X$ of genus $g$ with $n$ punctures,
\[\min_{\mathcal{P}} \max_{\gamma \in \mathcal{P}}\ell_X(\gamma) \leq B_{g,n},\]where $\mathcal{P}$ ranges over all pants decompositions of $X$. Parlier has shown that the Bers constant is bounded above by the area of the surface~\cite{Parlier2023}.

Instead of pants decompositions, one can decompose hyperbolic surfaces into ideal triangles and replace the length parameters by shear parameters, encoding how the ideal triangles are glued together~\cite{Bonahon1996,fock1998, thurston1998}. In the present work, we show that, similarly to the existence of pants decompositions with uniformly (depending on the topology) bounded length, all hyperbolic surfaces admit ideal triangulations with uniformly bounded shear parameters.

Given a hyperbolic surface $X$, we thus aim at finding a ``nice'' ideal triangulation $\mathcal{T}$ of $X$, that is, whose shear parameters\footnote{also denoted $S_{\lambda}^{\mathcal{T}}$ when there is no ambiguity.} $S_{\lambda}^{\mathcal{T}}(X)$, where $\lambda$ is a the geodesic arc components of $\mathcal{T}$, are all small. In other words, we want to bound from above the following quantity, called the \emph{minimax shear} of $X$: 
\[S(X)\coloneq \inf_{\mathcal{T}} \max_{\lambda \in \mathcal{T}}|S^{\mathcal{T}}_{\lambda}(X)|.\]
Our main result is the following.
\begin{mainthm}\label{thm:main}
    Let $X$ be a hyperbolic surface in $\teich_{g,n}$,  with $2g-2+n>0$. Then \[ S(X) < 32\cdot\log(8\pi(2g-2+n))+ 23.\]
\end{mainthm}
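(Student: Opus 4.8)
The plan is to exhibit, for each $X$, a concrete spiralling ideal triangulation $\mathcal{T}$ adapted to a short pants decomposition and to estimate its shears directly by hyperbolic trigonometry, so that $S(X)\le\max_{\lambda\in\mathcal{T}}|S^{\mathcal{T}}_\lambda(X)|$ obeys the stated bound. The first reduction I would make is to rewrite each shear as the logarithm of a cross-ratio of the four ideal endpoints of the two triangles adjacent to $\lambda$; equivalently, after fixing decoration (horocyclic) data, as a signed ratio of $\lambda$-lengths. This converts the problem of bounding $|S^{\mathcal{T}}_\lambda|$ into the problem of showing that the relevant cross-ratios stay within a polynomial factor of $1$ in terms of the geometric data of $X$. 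The target form $32\log(8\pi(2g-2+n))+23$ then reflects a bound of the shape $|S^{\mathcal{T}}_\lambda|\lesssim\log(\text{cross-ratio})$ together with a polynomial-in-area estimate for the cross-ratios, since by Gauss--Bonnet the area of $X$ equals $2\pi(2g-2+n)$, so that $8\pi(2g-2+n)$ is four times the area.

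For the construction I would start from a pants decomposition $\mathcal{P}=\{\gamma_1,\dots,\gamma_{3g-3+n}\}$ whose curve lengths are all controlled by the area: by the quantitative Bers theorem in Parlier's form one may take $\ell_X(\gamma_i)\le 2\pi(2g-2+n)$. I would then triangulate each pair of pants by geodesic arcs spiralling into its three boundary geodesics, so that every edge of $\mathcal{T}$ is asymptotic to the axis of some $\gamma_i$ in the universal cover. Placing the axis of $\gamma_i$ from $0$ to $\infty$ in the upper half-plane, the deck transformation acts by $z\mapsto e^{\ell_X(\gamma_i)}z$, and the successive spiralling edges occupy a geometric progression of ideal points. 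The cross-ratios governing the shears of edges near $\gamma_i$ are then explicit functions of $\ell_X(\gamma_i)$ and of the adjacent boundary lengths, and short curves contribute automatically small shears, since the scaling factor $e^{\ell}$ tends to $1$.

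The delicate point, and the step I expect to be the main obstacle, is to secure logarithmic rather than linear dependence on the lengths. A naive fan of edges spiralling into a geodesic of length $\ell$ produces individual shears comparable to $\ell$, which would only yield a bound linear in the area. To avoid this I would distribute the total shear forced around each curve (which is pinned by the holonomy constraint that $\gamma_i$ be geodesic of the prescribed length) over sufficiently many edges, and choose the spiralling depth and direction so that each individual cross-ratio is bounded by a fixed power of the lengths rather than by $e^{c\ell}$. Establishing this balancing uniformly over the whole surface is where the real work lies; it is also where the absolute constants $32$ and $23$ get pinned down, by carrying the $\cosh(\ell/2)$ and $\sinh(\ell/2)$ factors through the cross-ratio computation and bounding them by polynomials in the area.

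A final bookkeeping issue is global consistency: the spiralling directions on the two sides of each curve $\gamma_i$ must be compatible, and the winding must be chosen so as to absorb the Fenchel--Nielsen twist at $\gamma_i$ into the combinatorics of $\mathcal{T}$ without enlarging the shears. Once the per-edge cross-ratio bound is in place and the twists have been absorbed, taking logarithms and maximizing over the finitely many edges yields $\max_{\lambda}|S^{\mathcal{T}}_\lambda(X)|<32\log(8\pi(2g-2+n))+23$, and hence the claimed bound on $S(X)$.
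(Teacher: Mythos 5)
Your proposal follows the paper's broad strategy (a spiralling ideal triangulation adapted to a short decomposition, then $S(X)\le\max_\lambda|S^{\mathcal T}_\lambda|$), but the decomposition you start from makes the argument fail, for a reason you flag yourself and then do not overcome. As recalled in the paper's background section, the shears of the arcs spiralling into a closed leaf $\gamma$ from one side sum to $\ell_X(\gamma)$, and these contributions cannot cancel: two arcs spiralling into the same side of $\gamma$ in opposite directions would intersect, so all arcs on one side spiral the same way and their shears have a common sign. In your per-pants construction each side of each pants curve receives only a bounded number of arc-ends (a pair of pants is triangulated by at most three arcs), so some edge satisfies $|S_\lambda|\ge \ell_X(\gamma_i)/3$. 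Your only control on $\ell_X(\gamma_i)$ is the Bers-type bound, which is linear in the area; worse, no pants decomposition can do fundamentally better, since Buser exhibited surfaces on which every pants decomposition contains a curve of length at least of order $\sqrt{g}$. For such surfaces any pants-based spiralling triangulation has an edge whose shear is polynomial in $g$, which is incompatible with the claimed bound $32\log(8\pi(2g-2+n))+23$. Your proposed remedy --- distributing the holonomy $\ell_X(\gamma_i)$ over ``sufficiently many edges'' --- abandons the per-pants construction entirely (it would require unboundedly many arcs, coming from distant parts of the surface, all spiralling into a single curve), and no such triangulation is constructed or estimated in the proposal; this, together with the unproven ``polynomial cross-ratio'' estimate, is precisely the content of the theorem, not a balancing or bookkeeping step, and the constants $32$ and $23$ are posited rather than derived.

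The paper's route around exactly this obstruction is to change the decomposition rather than the distribution of shear: instead of a pants decomposition it uses a short \emph{hexagon} decomposition $(\Gamma,\mathcal A)$ (Proposition~\ref{smallhexagon}, extending Parlier's Theorem~1.3 to cusped surfaces), whose closed curves satisfy $\ell_X(\gamma)\le 2\log(4\area(X))$ and whose truncated orthogeodesic arcs satisfy $\ell_X(a^t)\le 6\log(4\area(X))$. Because the curves of $\Gamma$ are themselves only logarithmically long, the holonomy constraint around them is harmless, and comparatively crude estimates suffice: Proposition~\ref{lem:ShearFree} shows by an injectivity-radius/area argument that shear points avoid small cusp neighborhoods and thin collars, and Claims~\ref{claimA} and~\ref{claimB} show that both shear points on an edge $a_\infty$ lie within distance $D_{g,n}+2\rho$ of the incenters of the adjacent triangles, where $D_{g,n}$ is controlled by travelling along at most four logarithmically short sides of a truncated hexagon. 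No cross-ratio computation is needed. If you wish to salvage your write-up, the essential repair is to replace the Bers pants decomposition by the short hexagon decomposition; that substitution is the key idea of the paper, and without it the logarithmic bound is out of reach.
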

Lower and upper bounds for the minimax shear of punctured hyperbolic surfaces have been studied by Jiang~\cite{ManmanJiang2021} for ideal trianglulations whose arcs all end in cusps. In the present work, surfaces are not necessarily punctured and ideal triangulations may contain spiralling arcs. 

The strategy of the proof goes as follows. First, given a hyperbolic surface $X$ we consider a ``short'' hexagon decomposition $(\Gamma, \mathcal{A})$ on $X$, where $\Gamma$ is a disjoint collection of simple closed geodesics and $\mathcal{A}$ is a maximal collection of orthogeodesic arcs that cut the surface into a disjoint union of right-angled hexagons. An arc $a\in \mathcal{A}$ has $0$, $1$ or $2$ endpoints on components of $\Gamma$. This ``short'' hexagon decomposition exists by extending~\cite[Theorem 1.3]{Parlier2016}, originally stated for closed surface, to the case of punctured surfaces. From this ``short'' hexagon decomposition $(\Gamma, \mathcal{A})$, we construct an ideal triangulation $\mathcal{T}_{(\Gamma, \mathcal{A})}$ as follows. Let $a_{\infty}$ be the geodesic arc obtained from $a\in \mathcal{A}$ by making it spiral infinitely many times around the simple closed curve at each endpoint. This gives an ideal triangulation $\mathcal{T}_{(\Gamma, \mathcal{A})}\coloneq \left(a_{\infty}\right)_{a \in \mathcal{A}}$. By an area argument, the spiralling part of the arc $a_{\infty}$ does not contribute to the shear parameter along that arc. Thus, the latter can be bounded by the length of an appropriate truncation $a_{\infty}^T$ of $a_{\infty}$ and we obtain
\[S(X)\leq \max_{a_{\infty} \in \mathcal{T}_{(\Gamma, \mathcal{A})}} \ell_X(a_{\infty}^T).\]
Since the original orthogeodesic arcs $a$ were ``short'', we obtain an upper bound for the length of the truncated subarcs $a_{\infty}^T$.
\subsection*{Acknowledgements} 
The author is extremely grateful to Hugo Parlier for his support, for many useful discussions, and for carefully reading earlier drafts of this paper. The author would also like to thank Wai Yeung Lam, Didac Martinez-Granado and Katie Vokes for helpful comments that improved the readability.

\section{Background}
\subsection{Ideal triangulations and shearing parameters}
Let $\Hh^2$ denote the hyperbolic plane and fix $\Sigma_{g,n}$ an oriented surface of genus $g \geq 1$ with $n\geq 0$ cusps, such that $2g - 2 + n > 0$. A marked hyperbolic surface of signature $(g,n)$ is a pair $(X, \phi)$ where $X$ is an oriented hyperbolic surface of genus $g$ with $n$ punctures and $\phi: \Sigma_{g,n} \longrightarrow X$ is an orientation-preserving homeomorphism. Two marked hyperbolic surfaces $(X,\phi)$ and $(X', \phi')$ are equivalent if there exists an isometry $\iota:X\longrightarrow X'$ such that $\phi'$ and $\iota \circ \phi$ are isotopic. The \emph{Teichmüller space} of $\Sigma_{g,n}$, denoted by $\teich(\Sigma_{g,n})$ or $\teich_{g,n}$, is the space of equivalence classes of marked hyperbolic surfaces of signature $(g,n)$. 

The homotopy class of an \emph{essential}, i.e., not homotopic to a point nor to a puncture, closed curve $\gamma \subset \Sigma_{g,n}$ contains a unique geodesic representative. The length function of $\gamma$,
\begin{center}
    $\begin{array}[t]{lrcl}
& \ell_{\cdot}(\gamma): \teich_{g,n} & \longrightarrow &   \R_{\geq 0} \\  
  & (X,\phi) & \longmapsto & \ell_X(\gamma)\\
\end{array} $
\end{center}
associates to $(X,\phi)$ the length of the unique geodesic representative in the homotopy class of $\phi(\gamma)$.

The Teichmüller space is homeomorphic to $\R^{6g-6+2n}$ via the \emph{Fenchel-Nielsen coordinates}. Those coordinates depend on a choice of pants decomposition $\mathcal{P}=(\gamma_1,\ldots, \gamma_{3g-3+n})$ of $\Sigma_{g,n}$, i.e.\ a maximal simple multicurve $\mathcal{P}$ on $\Sigma_{g,n}$. The pants decomposition $\mathcal{P}$ has $3g-3+n$ components and its complement $\Sigma_{g,n}\smallsetminus \mathcal{P}$ is a disjoint union of $(2g-2+n)$ three-holed spheres, called \emph{pair of pants}. The Fenchel-Nielsen coordinates associate to a point of Teichmüller space length and twist parameters along each curve in the pants decomposition $\mathcal{P}$. 

\begin{figure}[h]
    \centering
    \begin{overpic}[width=.6\linewidth,keepaspectratio]{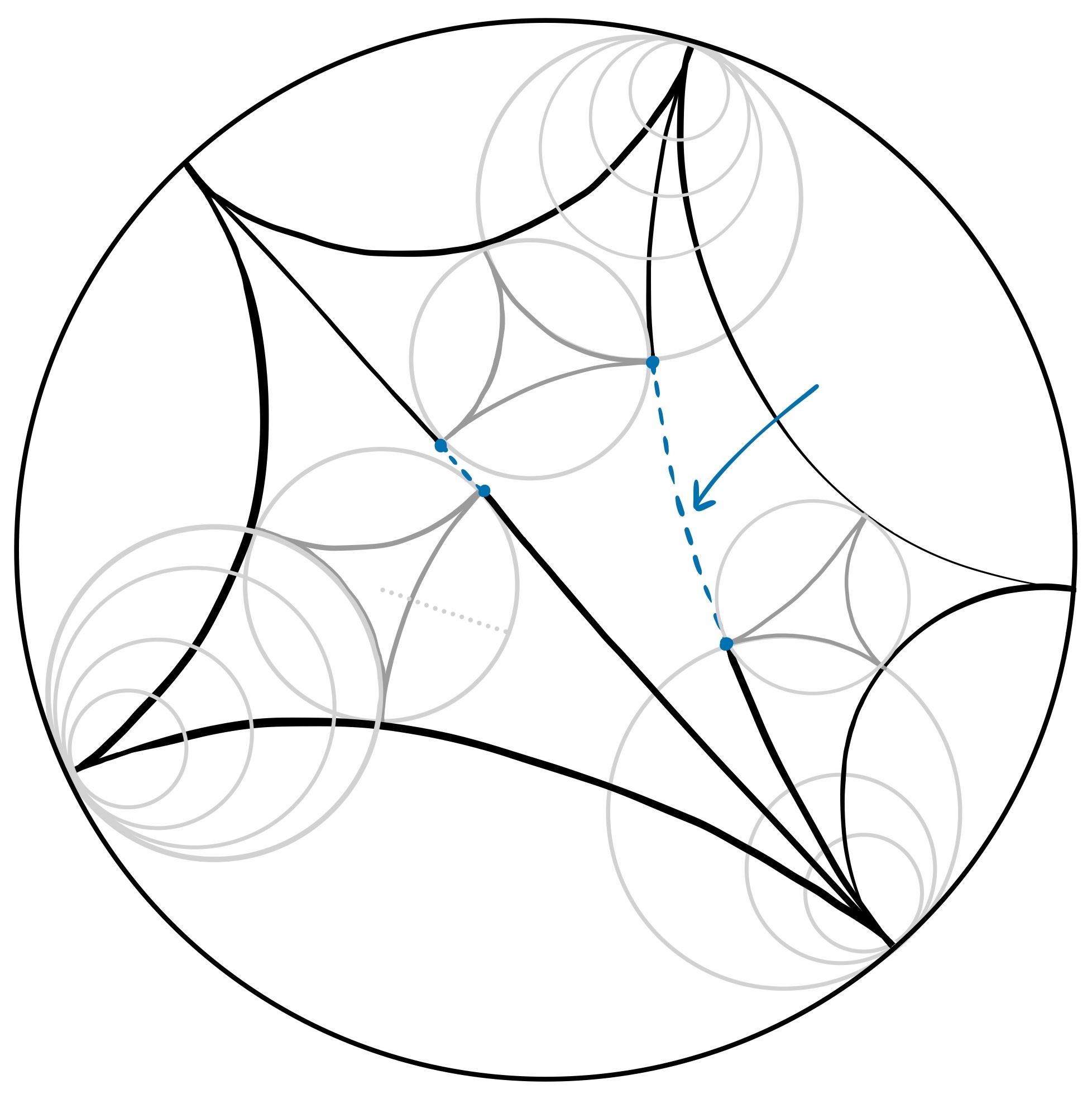}
    \put(27,79){\Large$\Delta$}
    \put(65.75,73){\Large$\Delta'$}
    \put(55.5,78){$\lambda$}
    \put(41,44.75){\small\color{Gray}$2\rho$}
    \put(73,66){\color{RoyalBlue}$S_{\lambda}(\Delta, \Delta')$\color{black}}
\end{overpic}\vspace*{-0.25cm}
    \caption{Shear along the common edge of two ideal triangles.}
    \label{shear2}
\end{figure}

Instead of decomposing a surface into pair of pants one can decompose it into ideal triangles. To such a decomposition one can associate \emph{shear parameters}, which we describe below. These parameters can be completed into a global parametrization of the Teichmüller space~\cite{Bonahon1996,fock1998, thurston1998}. 

Any two hyperbolic ideal triangles  are isometric to each other, in particular they are isometric to the \emph{standard} triangle $(0,1,\infty)$ in the upper half-plane model. Every ideal triangle has an inner circle tangent to its sides, called its \emph{inscribed circle}. it has radius $2\rho$, where \[\rho\coloneq\frac{\log(3)}{4}.\]
The tangency points are called \emph{shear points}. Two ideal triangles $\Delta$ and $\Delta'$ that share an edge $\lambda$ determine two shear points on $\lambda$. The signed distance (with respect to a fixed choice of orientation of $\lambda$) between those shear points is called the shear of $\Delta$ and $\Delta'$ along $\lambda$ and is denoted by $S_{\lambda}(\Delta,\Delta')$.

The inscribed circle of an ideal triangle $\Delta$ contains an equilateral \emph{contact triangle} whose vertices are the three shear points of $\Delta$. The contact triangle is not a geodesic triangle but its sides are horocyclic segments perpendicular to the sides of the ideal triangle. The contact triangle can also be obtained by foliating each vertex of $\Delta$ with horocycles: it is formed by three foliating leaves that meet tangentially, see Figure~\ref{shear2}. 

An ideal triangulation $\mathcal{T}$ on $X$ is a finite maximal geodesic lamination, i.e., a closed finite union of disjoint simple (complete) geodesics. Components of $\mathcal{T}$ are either closed geodesics or biinfinite geodesic arcs. The latter can end in cusps or spiral around closed components of $\mathcal{T}$. By maximality, the complement of $\mathcal{T}$ on $X$ is a disjoint union of $(4g-4+2n)$ ideal triangles. Edges of those triangles must be biinfinite components of $\mathcal{T}$. Since each biinfinite arc is adjacent to two ideal triangles and each triangle has three edges, there are $6g-6+3n$ biinfinite components in any ideal triangulation of $X$. There is between $0$ and $(3g-3+n)$ closed components in $\mathcal{T}$, but those are not edges of any ideal triangle and they are determined by the biinfinite arcs spiralling around them. Abusing notation, $\mathcal{T}$ also refers to the set of biinfinite components of the ideal triangulation.

The shearing parameters of $X$ with respect to an ideal triangulation $\mathcal{T}$, with biinfinite components $\lambda_1,\dots,\lambda_{6g-6+3n}$,
\begin{center}
    $\begin{array}[t]{lrcl}
    & s_{T}:\teich_{g,n} & \longrightarrow &   \R^{6g-6+3n}  \\  
    & (X,\phi) & \longmapsto & (S^{\mathcal{T}}_{\lambda_1}, \ldots, S^{\mathcal{T}}_{\lambda_{6g-6+3n}})\\
\end{array} $
\end{center}
are defined as follows. Fix a lift $\widetilde{\mathcal{T}}$ of $\mathcal{T}$ to the hyperbolic plane $\Hh^2$. Its complement $\Hh^2-\widetilde{\mathcal{T}}$ is a disjoint union of ideal triangles. Let $1\leq i\leq 6g-6+3n$ and pick a lift $\widetilde{\lambda}_i\subset\widetilde{\mathcal{T}}$ of $\lambda_i$. It is the edge of two distinct ideal triangles $\widetilde{\Delta}$ and $\widetilde{\Delta}'$. Then $S^{\mathcal{T}}_{\lambda_i}$ is defined as the shear parameter of $\widetilde{\Delta}$ and $\widetilde{\Delta}'$ along $\widetilde{\lambda}_i$, i.e. \[S^{\mathcal{T}}_{\lambda_i}\coloneq S_{\widetilde{\lambda}_i}(\widetilde{\Delta},\widetilde{\Delta}').\]

\begin{remark}
    The shearing parameters depend up to sign to choices of orientation for the $\lambda_i$. We assume them to be fixed once and for all.
    
    The shearing parameters are not all independent: the sum of shears along all arcs coming from the same cusp is zero, and the sum of the shears of all arcs spiralling around the same closed curve (on the same side) equals the length of that curve\cite{chekhov2004,Bestvina2009}. If all the triangles have their vertices in cusps, there are then $6g-6+2n$ independent shearing coordinates which provide a global parameterization of Teichmüller space. However, when $\mathcal{T}$ has closed components, the Fenchel-Nielsen twists around them need to be added to provide a global parameterization, called the \emph{shearing coordinates}~\cite{Bonahon1996}.
\end{remark}

Our aim is to find an upper bound for the \emph{minimax shear of $(X,\phi)$}
\[S(X) \coloneq \inf_{\mathcal{T}} \max_{\lambda \in \mathcal{T}} |S^{\mathcal{T}}_{\lambda}|,\]
where $\mathcal{T}$ ranges over all ideal triangulations of a marked\footnote{We usually omit the marking $\phi$ from the notation.} hyperbolic surface $(X,\phi)$.

\subsection{Collars and cusps neighborhoods}
We now recall the definitions and basic properties of collars and cusps neighborhoods, which will be useful for bounding shears parameters in later arguments.

Consider a simple closed geodesic $\gamma$ on $X$. Its \emph{collar} of width $w(\gamma)$ is defined as the region
\[ \mathcal{C}_{w(\gamma)} \coloneqq \{\, p \in X \mid d(p, \gamma) \le w(\gamma) \,\}\subset X,\]
where the width function is defined by 
\[ w(x) \coloneqq \operatorname{arcsinh}\!\left(\frac{1}{\sinh\!\big(x/2\big)}\right).\]
and $w(\gamma)\coloneqq w(\ell_X(\gamma))$.

If $\delta < w(\gamma)$, the region $\mathcal{C}_\delta(\gamma)$ is the subset of points in $\mathcal{C}_{w(\gamma)}$ at distance $\leq\delta$ from $\gamma$ is called the $\delta$-\emph{collar} around $\gamma$.

If $n>0$, punctures correspond to \emph{cusps} on the hyperbolic surface $X$. A cusp is a region isometric to the cylinder
\[ (-\infty, \log 2] \times \mathbb{S}^1 \]
equipped with the Riemannian metric
\[ d\rho^2 + e^{2\rho} \, dt^2. \]
Each cusp is bounded by a horocycle of length $2$~\cite{Keen1974}. Given a cusp $c$ on $X$, the subset of the cusp bounded by a horocycle of length $\delta \le 2$ is called a $\delta$-\emph{cusp neighborhood}, denoted by $N_\delta(c)$. When $\delta=2$, it is the standard cusp neighborhood. The following result will be useful for cutting and pasting arguments about collars and cusps neighborhoods. 

\begin{lemma}[Collar Theorem~\cite{Buser}, §4.4.6]\label{CollarThm}
Let $X$ be a hyperbolic surface. If $\gamma_1$ and $\gamma_2$ are two disjoint simple closed geodesics on $X$, then their collars $\mathcal{C}_{w(\gamma_1)}$ and $\mathcal{C}_{w(\gamma_2)}$ are disjoint embedded cylinders. Moreover, if $X$ has punctures, the collars around disjoint simple closed geodesics and the cusp neighborhoods are pairwise disjoint.
\end{lemma}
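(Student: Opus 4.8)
The plan is to pass to the universal cover $\Hh^2$ and translate both assertions into separation estimates between lifts. Write $X = \Hh^2/\Gamma$ with $\Gamma \cong \pi_1(X)$ a discrete torsion-free group of isometries. A simple closed geodesic $\gamma$ of length $\ell$ lifts to the $\Gamma$-orbit of the axis $A$ of a primitive hyperbolic element $g$ with translation length $\ell$; since $\Gamma$ is torsion-free, the stabiliser of $A$ is exactly $\langle g\rangle$. The collar $\mathcal{C}_{w(\gamma)}$ is the image of the $w(\gamma)$-neighbourhood $N_{w(\gamma)}(A)$, and because $N_{w(\gamma)}(A)/\langle g\rangle$ is always an embedded cylinder in $\Hh^2/\langle g\rangle$, the collar is an embedded cylinder precisely when $N_{w(\gamma)}(A)$ is disjoint from each translate $h\,N_{w(\gamma)}(A)$ with $hA \ne A$. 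As $\gamma$ is simple, any two distinct lifts $A$ and $A' = hA$ are disjoint geodesics, so the problem reduces to the inequality $d(A, A') \ge 2\,w(\gamma)$. Likewise, disjointness of the collars of two disjoint simple closed geodesics $\gamma_1,\gamma_2$ reduces to $d(A_1, A_2) \ge w(\gamma_1) + w(\gamma_2)$ for lifts $A_i$, and the cusp statements reduce to separation estimates between lifts and horoballs.

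For the embeddedness of a single collar I would normalise $A$ to be the imaginary axis, so that $g(z) = e^{\ell} z$, and take a distinct lift $A' = hA$; since $A'$ is disjoint from $A$ its two feet lie on the same side of the origin, and up to an isometry of $\Hh^2$ preserving $A$ we may assume they are $0 < a < b$. A direct computation of the common perpendicular gives the closed forms
\[ \cosh d(A, A') = \frac{a+b}{\,b-a\,}, \qquad \sinh d(A, A') = \frac{2\sqrt{ab}}{\,b-a\,}. \]
The role of simplicity is to force all translates $g^{k}A'$ to be pairwise disjoint and disjoint from $A$; feeding this non-crossing constraint into the formula above bounds how closely $A'$ may approach $A$, that is, it bounds the ratio $b/a$ from above, and substituting that bound yields the sharp inequality $\sinh\!\big(\tfrac12 d(A,A')\big) \ge 1/\sinh(\ell/2) = \sinh\!\big(w(\gamma)\big)$, i.e. $d(A,A') \ge 2\,w(\gamma)$, as required.

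The two-geodesic case is handled by the same kind of estimate: lifts $A_1, A_2$ of disjoint simple geodesics are disjoint, and a common perpendicular of length $< w(\gamma_1)+w(\gamma_2)$ would make the two equidistant bands overlap, contradicting the non-crossing of the relevant translates; the right-angled quadrilateral formulas give the clean additive bound. For the cusps I would put the parabolic fixed point at $\infty$, so that the standard cusp neighbourhood lifts to a horoball $\{\operatorname{Im} z \ge c\}$ with $c$ fixed by the length-$2$ horocycle normalisation; embeddedness of the cusp neighbourhood and its disjointness from a collar then follow because any lift $A$ of a geodesic disjoint from the cusp cannot rise above a controlled height, again via a horocycle–geodesic distance computation. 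The main obstacle is the sharp trigonometric step in the second paragraph: it is exactly there that simplicity (non-crossing of all lifts) must be used in a quantitatively optimal way to pin down the constant $w(\gamma) = \operatorname{arcsinh}\!\big(1/\sinh(\ell/2)\big)$ rather than a weaker bound; once the single-collar width is sharp, the two-collar and cusp separations follow from the same circle of ideas, which is precisely the content of Buser's computation in the cited reference.
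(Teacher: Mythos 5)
First, a remark on the comparison itself: the paper does not prove this lemma — it quotes it from Buser's book (\S 4.4.6) and uses it as a black box — so your proposal can only be measured against the classical proof it cites, and against correctness.

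Your reduction to the universal cover is sound, and both closed forms are correct: for $A$ the imaginary axis and $A'$ a disjoint geodesic with feet $0<a<b$ one has $\cosh d(A,A')=\frac{a+b}{b-a}$ and $\sinh d(A,A')=\frac{2\sqrt{ab}}{b-a}$. The genuine gap is the sentence claiming that non-crossing of the translates $g^{k}A'$ ``yields the sharp inequality $\sinh\!\big(\tfrac12 d(A,A')\big)\ge 1/\sinh(\ell/2)$''. It does not. Non-crossing of $A'$ and $gA'$ (feet $e^{\ell}a$, $e^{\ell}b$) forces the intervals $[a,b]$ and $[e^{\ell}a,e^{\ell}b]$ to be disjoint, i.e.\ $b\le e^{\ell}a$; the constraints from $g^{k}A'$ with $|k|\ge 2$ follow from this one and add nothing. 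Writing $b=e^{s}a$ with $0<s\le\ell$, your own formula gives
\[
\sinh d(A,A')=\frac{2e^{s/2}}{e^{s}-1}=\frac{1}{\sinh(s/2)}\;\ge\;\frac{1}{\sinh(\ell/2)},
\]
that is $d(A,A')\ge w(\gamma)$, \emph{not} $d(A,A')\ge 2w(\gamma)$. Since embeddedness of the collar of half-width $w(\gamma)$ is equivalent to all distinct lifts being at distance at least $2w(\gamma)$, your constraints only yield the collar theorem with half the correct width. Worse, they cannot be pushed further: the configuration $b=e^{s}a$ with $s$ close to $\ell$ satisfies every disjointness condition you impose, yet has $d(A,A')$ arbitrarily close to $w(\gamma)$. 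So the factor of two — which is the entire content of the sharp statement — is unprovable from pairwise non-crossing of the lifts you consider; it is not a matter of ``feeding in the constraint more carefully''.

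The missing idea, and the actual mechanism in Buser's proof, is global rather than pairwise. One takes a shortest geodesic arc $\delta$ meeting $\gamma$ perpendicularly at both endpoints (the projection of the common perpendicular of two nearest lifts), and observes that a neighbourhood of $\gamma\cup\delta$ is an embedded pair of pants or one-holed torus whose boundary curves admit geodesic representatives or degenerate to cusps/points — a consequence of completeness and discreteness that no finite list of disjoint lifts sees. Right-angled hexagon and pentagon identities applied to this subsurface then give the bound; the binding case is the arc returning to $\gamma$ from the opposite side (one-holed torus with boundary length $L\ge 0$), where
\[
\cosh \ell_X(\delta)\;=\;\coth^{2}(\ell/2)+\frac{\cosh(L/2)}{\sinh^{2}(\ell/2)}\;\ge\;\frac{\cosh^{2}(\ell/2)+1}{\sinh^{2}(\ell/2)},
\]
which simplifies exactly to $\sinh\!\big(\ell_X(\delta)/2\big)\sinh(\ell/2)\ge 1$, i.e.\ $\ell_X(\delta)\ge 2w(\gamma)$. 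The same machinery yields the additive separation $d(A_1,A_2)\ge w(\gamma_1)+w(\gamma_2)$ for disjoint geodesics and the collar--cusp disjointness, both of which your sketch asserts but does not derive. As it stands the proposal is not a proof; either reproduce the subsurface-plus-trigonometry argument, or do what the paper does and cite Buser.
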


\subsection{Hexagon decompositions}
An hexagon decomposition $(\Gamma, \mathcal{A})$ of a hyperbolic surface $X$ is a collection of disjoint simple closed geodesics $\Gamma=(\gamma_1,\ldots, \gamma_k)$ together with a maximal collection of orthogeodesic arcs $\mathcal{A}=(a_1,\ldots a_{6g-6+2n})$ (maximal once $\Gamma$ is fixed) such that $X\smallsetminus \Gamma\cup\mathcal{A}$ is a disjoint union of right-angled hexagons~\cite{G_ltepe_2025}. We use $\Gamma$ and $\mathcal{A}$ to denote both the collection of curves or arcs and their union, when there is no ambiguity.

Given an arc $a\in \mathcal{A}$, let $\gamma_a$ be the unique geodesic in the homotopy class of the concatenation
\begin{itemize}
    \item[-] $\gamma_1 \ast a \ast \gamma_2 \ast a^{-1}$ if $a$ ends on $\gamma_1 \in \Gamma$ and $\gamma_2 \in \Gamma$;
    \item[-] $ h \ast a \ast \gamma \ast a^{-1}$ if  $a$ has one endpoint on a cusp $c$ and the other on $\gamma \in \Gamma$,  here $h$ denotes the horocycle of length 2 in the cusp neighborhood of $c$;
    \item[-]  $h_1 \ast a \ast h_2 \ast a^{-1}$ if $a$ is an ideal arc ending in two cusps $c_1,\; c_2$, here $h_1$ (resp.$h_2$) denote the horocycles of length 2 in the cusp neighborhoods of $c_1$ (resp.$c_2$).
\end{itemize}

\begin{figure}[h]\label{gamma_a}
    \centering
    \begin{overpic}[width=1.0\linewidth,keepaspectratio]{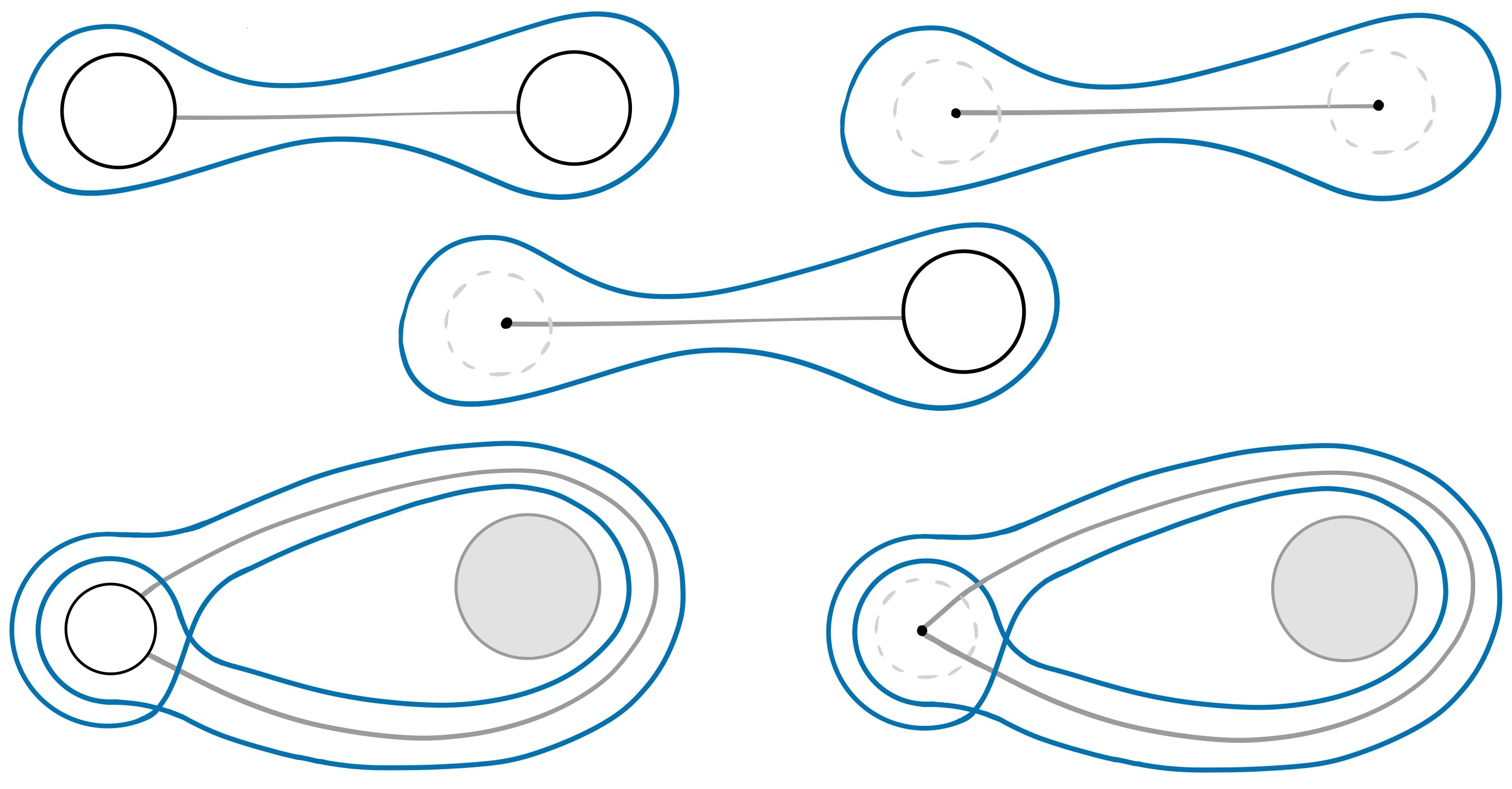}
        \put(19,48){\color{RoyalBlue}$\gamma_a$}
        \put(28,45){\color{Gray}$a$}
        \put(41,47){$\gamma_2$}
        \put(3,41){$\gamma_1$}
        \put(73,48){\small\color{RoyalBlue}$\gamma_a$}
        \put(57,41){\small\color{Gray}$h_1$}
        \put(95,47){\small\color{Gray}$h_2$}
        \put(27.5,28){\small\color{Gray}$h$}
        \put(58,28){$\gamma$}
        \put(44,34){\color{RoyalBlue}$\gamma_a$}
        \put(9,6.5){\small$\gamma$}
        \put(12,15.5){\small\color{Gray}$a$}
        \put(15,12){\color{RoyalBlue}$\gamma_a$}
        \put(63.25,6.25){\small\color{Gray}$h$}
        \put(66,15,5){\small\color{Gray}$a$}
        \put(70,12){\color{RoyalBlue}$\gamma_a$}
\end{overpic}\vspace*{-0.5cm}
    \caption{Construction of the curve $\gamma_a$ associated to an arc $a$ having 0, 1 or 2 endpoints on curves in $\Gamma$}
\end{figure}

The existence of a ``short'' hexagon decomposition of a compact hyperbolic surface is given by Theorem 1.3 from Parlier\cite{Parlier2016}. The original statement of the theorem was proved for closed surfaces, but its argument extends to hyperbolic surfaces with cusps as follows. First, one adapts a lemma of Bavard~\cite{Bavard1996}: the proof is identical, except for the area formula which must be adjusted. Second, at the beginning of the proof of the theorem, one removes the thin part of the surface by truncating collar neighborhoods around geodesics of length $< 2 \, \mathrm{arcsinh}(1)$ and cusp neighborhoods bounded by horocycles of length~$2$.

\begin{lemma}[Theorem 1 in~\cite{Bavard1996}]\label{lem:Bavard}
    For any $X \in \teich_{g,n}$ and any $x \in X$, there exists a geodesic loop $\delta_x$ based at $x$ such that
    \[ \ell_X(\delta_x) \le 2 R_{g,n},\] where \[R_{g,n} := \arcch\Bigg(\frac{1}{2 \, \sin\big(\frac{\pi}{12g - 6 + 6n}\big)}\Bigg)\]
\end{lemma}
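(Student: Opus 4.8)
The plan is to pass to the universal cover and convert the statement into a packing estimate. Lift $x$ to a point $\tilde x \in \Hh^2$ and write $\Gamma\cong\pi_1(X)$ for the deck group, so that geodesic loops based at $x$ correspond to nontrivial elements $\gamma\in\Gamma$, the loop attached to $\gamma$ having length $d(\tilde x,\gamma\tilde x)$. If $r$ denotes the injectivity radius of $X$ at $x$, then the length of the \emph{shortest} geodesic loop at $x$ equals $2r=\min_{\gamma\neq 1}d(\tilde x,\gamma\tilde x)$, and the open metric balls of radius $r$ centered at the orbit points $\Gamma\tilde x$ are pairwise disjoint. It therefore suffices to prove the radius bound $r\le R_{g,n}$.

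First I would build a one-vertex geodesic triangulation of $X$ out of the orbit $\Gamma\tilde x$: take its Delaunay decomposition in $\Hh^2$, refine any non-triangular cells by diagonals (which again join orbit points), and push the result down to $X$, where the $n$ cusps appear as ideal vertices. Every edge joins two distinct orbit points, so each finite edge has length $\ge 2r$, and hence every triangle has all of its finite sides of length $\ge 2r$. Since the only finite vertex is $x$, the compactified surface has $V=1+n$ vertices; combined with $3F=2E$ this forces exactly $F=4g-2+2n$ triangles.

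The core of the argument is then an averaging estimate. By Gauss--Bonnet the total area is $2\pi(2g-2+n)$, so some triangle $T$ has area at most the average $\tfrac{\pi(2g-2+n)}{2g-1+n}$. On the other hand, among all hyperbolic triangles whose three sides are $\ge 2r$, the one of least area is the equilateral triangle of side exactly $2r$, whose vertex angle $\alpha$ satisfies $\cos\alpha=\tfrac{\cosh 2r}{\cosh 2r+1}$ and whose area is $\pi-3\alpha$. Feeding the bound $\pi-3\alpha\le\tfrac{\pi(2g-2+n)}{2g-1+n}$ into this relation, and simplifying with the half-angle identity $1-\cos(2\pi/N)=2\sin^2(\pi/N)$ where $N=12g-6+6n$, the inequality collapses, after using $\cosh 2r=2\cosh^2 r-1$, to $\cosh r\le\tfrac{1}{2\sin(\pi/N)}$, i.e. $r\le R_{g,n}$. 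The shortest loop then has length $2r\le 2R_{g,n}$, as claimed.

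The hard part will be obtaining the \emph{sharp} constant rather than a crude one: the naive estimate comparing the embedded disk $B(x,r)$ of area $2\pi(\cosh r-1)$ directly to the total area only yields $\cosh r\le 2g-1+n$, which is weaker. Extracting the factor $\tfrac{1}{2\sin(\pi/N)}$ is exactly what forces the triangle-by-triangle analysis and the identification of the equilateral triangle as the extremal (minimal-area) shape, which itself rests on the monotonicity of a hyperbolic triangle's area in each of its side lengths. The secondary technical point, and the only place where cusps enter, is checking that triangles carrying ideal vertices also have area at least that of the equilateral triangle of side $2r$ (equivalently, that a long finite side forces a small angle at $x$); this is precisely the area-formula adjustment flagged above, and can be dispatched either by a direct estimate or by truncating the cusp neighborhoods before triangulating.
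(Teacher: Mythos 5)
Your reduction to an injectivity-radius bound (shortest loop of length $2r$, orbit points pairwise $\ge 2r$ apart) and your closing trigonometric computation (turning $\alpha \ge \tfrac{2\pi}{12g-6+6n}$ into $\cosh r \le \tfrac{1}{2\sin(\pi/N)}$ via $\cos\alpha = \tfrac{\cosh 2r}{\cosh 2r + 1}$) are exactly right, and the constants come out correctly. But the core geometric input of your middle step is false, and this is a genuine gap rather than a technicality. The claim that ``among all hyperbolic triangles whose three sides are $\ge 2r$, the one of least area is the equilateral triangle of side exactly $2r$'' fails, because hyperbolic area is \emph{not} monotone in the side lengths: fix two sides equal to $2r$ and let the third side $c$ increase toward the degenerate value $4r$; then the angle opposite $c$ tends to $\pi$, the other two angles tend to $0$, and the area $\pi - (\text{angle sum})$ tends to $0$, while all three sides remain $\ge 2r$. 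So the infimum of areas over your class of triangles is $0$, and the averaging step (``some triangle has at most average area, hence $\pi - 3\alpha \le$ average'') yields no bound on $r$ at all: sliver triangles of tiny area coexist with arbitrarily large $r$. The same sliver phenomenon defeats your treatment of ideal vertices --- a triangle with one ideal vertex and finite side $\ge 2r$ can have one finite angle arbitrarily close to $\pi$ and area arbitrarily close to $0$, so ``a long finite side forces a small angle at $x$'' is false as well.

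This obstruction is precisely why the sharp constant is a real theorem, and it explains how Bavard actually argues (which is what the paper relies on: it cites \cite{Bavard1996} rather than reproving the lemma, noting only that the area formula must be adjusted for cusps). Bavard does not average over Delaunay triangles; he invokes B\"or\"oczky's packing-density theorem: in any packing of $\Hh^2$ by disks of radius $r$, the density of each disk in its Dirichlet--Voronoi cell is at most $\tfrac{3\alpha(\cosh r - 1)}{\pi - 3\alpha}$, where $\alpha$ is the angle of the equilateral triangle of side $2r$. Applied to the orbit packing $\{B(\gamma \tilde{x}, r)\}_{\gamma}$, whose Dirichlet cell at $\tilde{x}$ is the Dirichlet fundamental domain of area $\area(X) = 2\pi(2g-2+n)$, this gives
\[
\frac{\cosh r - 1}{2g-2+n} \;\le\; \frac{3\alpha(\cosh r - 1)}{\pi - 3\alpha},
\]
hence $\alpha \ge \tfrac{2\pi}{12g-6+6n}$, after which your own endgame finishes the proof. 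The ``adjusted area formula'' is exactly the replacement of $2\pi(2g-2)$ by $2\pi(2g-2+n)$ here; no triangle-by-triangle analysis of ideal vertices is needed, since cusps enter only through the area of the fundamental domain. B\"or\"oczky's theorem is itself a substantial result whose proof is designed to overcome exactly the thin-cell compensation problem your argument runs into, so it cannot be replaced by the monotonicity lemma you propose: to repair the write-up, cite B\"or\"oczky at this step (or reprove it); the rest of your outline can stand.
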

    
Given a geodesic loop based at a point (not necessarily smooth at the base point), we want to bound the distance from the base point to the boundary of the collar associated with the simple closed geodesic in its free homotopy class. This is the content of the following lemma from Parlier.
    
\begin{lemma}[Lemma 3.8 in \cite{Parlier2016}]\label{lem:boundlength0}
Let $c \subset X$ be a simple geodesic loop based at a point, and let $\gamma$ be the unique simple closed geodesic freely homotopic to $c$. Then
    \[  \sup_{p \in c} d_X(p, \mathcal{C}_{w_\gamma}) < \log \left( \sinh\left( \frac{\ell_X(c)}{2} \right)\right).\]
\end{lemma}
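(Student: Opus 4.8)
The plan is to pass to the universal cover $\Hh^2$ and reduce the whole statement to an estimate involving a single geodesic segment and the axis of one hyperbolic isometry. Write $L \coloneq \ell_X(\gamma)$ and let $g$ be the deck transformation represented by the based loop $c$ in $\pi_1(X,x)$. Since $c$ is freely homotopic to the simple closed geodesic $\gamma$, the isometry $g$ is hyperbolic with translation length $L$; I fix a lift $\tilde x$ of $x$ so that $c$ lifts to the geodesic segment $\tilde c$ from $\tilde x$ to $g\tilde x$, and so that $\tilde\gamma \coloneq \mathrm{axis}(g)$ is a lift of $\gamma$. Set $d \coloneq d_{\Hh^2}(\tilde x, \tilde\gamma)$.

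The key geometric input is a convexity argument. For $p \in c$ with lift $\tilde p \in \tilde c$ one has $d_X(p,\gamma) \le d_{\Hh^2}(\tilde p, \tilde\gamma)$, and since the collar $\mathcal{C}_{w_\gamma}$ is exactly the $w(\gamma)$-sublevel set of $d_X(\cdot,\gamma)$, this gives $d_X(p,\mathcal{C}_{w_\gamma}) \le d_{\Hh^2}(\tilde p,\tilde\gamma) - w(\gamma)$ on the complement of the collar. The function $\tilde p \mapsto d_{\Hh^2}(\tilde p, \tilde\gamma)$ is convex along the geodesic $\tilde c$, so it attains its maximum at an endpoint; as $g$ preserves $\tilde\gamma$, both endpoints $\tilde x$ and $g\tilde x$ lie at the same distance $d$ from $\tilde\gamma$. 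Hence $\sup_{p\in c} d_X(p,\mathcal{C}_{w_\gamma}) \le d - w(\gamma)$, and it remains to bound $d - w(\gamma)$ in terms of $\ell_X(c)$.

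For this I would use the standard displacement formula for a hyperbolic isometry, which reads $\sinh(\ell_X(c)/2) = \sinh\!\big(d_{\Hh^2}(\tilde x, g\tilde x)/2\big) = \cosh(d)\,\sinh(L/2)$. From the definition $w(\gamma) = \operatorname{arcsinh}(1/\sinh(L/2))$ one computes $\cosh(w(\gamma)) = \coth(L/2)$ and hence the clean identity $e^{-w(\gamma)} = \coth(L/2) - 1/\sinh(L/2) = \tanh(L/4)$. Combining these with the elementary inequalities $e^{d} < 2\cosh(d)$ and $2\tanh(L/4) \le \sinh(L/2)$ (the latter equivalent to $\cosh^2(L/4) \ge 1$), I obtain
\[ e^{\,d - w(\gamma)} = e^{d}\tanh(L/4) < 2\cosh(d)\tanh(L/4) \le \cosh(d)\sinh(L/2) = \sinh(\ell_X(c)/2), \]
and taking logarithms gives $d - w(\gamma) < \log(\sinh(\ell_X(c)/2))$, which together with the previous paragraph yields the claim.

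The computational part is routine once the displacement formula and the identity $e^{-w(\gamma)} = \tanh(L/4)$ are in hand; I expect the only real care to be needed in the geometric reduction, namely identifying $g$ as the isometry whose axis projects to $\gamma$ so that both endpoints of $\tilde c$ are genuinely equidistant from $\tilde\gamma$, and in the observation that $\mathcal{C}_{w_\gamma}$ is a sublevel set of $d_X(\cdot,\gamma)$ so that the distance to the collar equals $d_X(p,\gamma) - w(\gamma)$ outside the collar. The estimate is the substantive one precisely when the base point lies outside the collar (so that $d > w(\gamma)$ and the right-hand side is positive), which is the regime in which the lemma is applied.
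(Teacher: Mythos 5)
Your proof is correct. There is nothing in the paper to compare it against: the lemma is quoted verbatim from Parlier (\cite{Parlier2016}, Lemma 3.8) and the paper supplies no proof, only the citation. Your route---lifting to $\Hh^2$, using convexity of $d_{\Hh^2}(\cdot,\tilde\gamma)$ along the geodesic lift of $c$ to reduce to the endpoints, then the Saccheri-quadrilateral displacement identity $\sinh(\ell_X(c)/2)=\cosh(d)\sinh(L/2)$ combined with $e^{-w(\gamma)}=\tanh(L/4)$---is the standard argument and, as far as I can tell, essentially the one in Parlier's paper. The individual steps all check out: the collar is by definition the sublevel set $\{\,p : d_X(p,\gamma)\le w(\gamma)\,\}$, so the distance to it equals $d_X(\cdot,\gamma)-w(\gamma)$ outside it; both endpoints of the lift are equidistant from the axis because the deck transformation $g$ preserves that axis; and the two elementary inequalities $e^{d}<2\cosh(d)$ and $2\tanh(L/4)\le \sinh(L/2)$ are valid.

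One caveat, which is a defect of the statement rather than of your argument: your chain establishes the conclusion whenever $d\ge w(\gamma)$, and also whenever the right-hand side is positive; in the one remaining case---the loop lies entirely inside the collar and $\ell_X(c)\le 2\arcsinh(1)$, e.g.\ $c=\gamma$ a short closed geodesic based at one of its points---the left-hand side equals $0$ while the right-hand side is $\le 0$, so the strict inequality as stated fails for any proof. The lemma therefore carries an implicit hypothesis (satisfied where it is invoked in the paper, since there the basepoint is taken far from all collars and cusp regions), and your closing remark correctly identifies that this is the regime in which the estimate has content.
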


Next, we adapt the proof of Theorem 1.3 from~\cite{Parlier2016}, to show that every hyperbolic surface admits a \emph{short hexagon decomposition}—that is, a hexagon decomposition satisfying the bounds stated below.

\begin{proposition}\label{smallhexagon}
    Any $X \in \teich_{g,n}$ with $2g-2+n>0$ admits a hexagon decomposition $(\Gamma, \mathcal{A})$ such that:
    \begin{itemize}
        \item[-] For all $\gamma \in \Gamma$, 
        \[\ell_X(\gamma) \le 2 \, \log \big( 4 \, \area(X) \big).\]
        \item[-] For all $a \in \mathcal{A}$ ending on two curves in $\Gamma$, 
        \[
        \ell_X(\gamma_a) \le 8 \, \log \big( 4 \, \area(X) \big).
        \]
        \item[-] Moreover, if $a^t$ denotes the truncation of an orthogeodesic arc $a \in \mathcal{A}$ obtained by removing the subarcs contained in cusp neighborhoods and in collars around curves of length $\leq 2\arcsinh(1)$, then
        \[
        \ell_X(a^t) \le 6 \, \log \big( 4 \, \area(X) \big).
        \]
    \end{itemize}
\end{proposition}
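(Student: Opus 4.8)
The plan is to adapt the proof of Parlier's Theorem~1.3 (\cite{Parlier2016}) to the cusped setting, using the two modifications already isolated in the text: the closed-surface loop bound is replaced by Lemma~\ref{lem:Bavard}, and the whole argument is run on the \emph{thick part} of $X$ obtained by excising the standard cusp neighborhoods together with the collars $\mathcal{C}_{w(\gamma)}$ around all geodesics with $\ell_X(\gamma) \le 2\arcsinh(1)$; these regions are pairwise disjoint by Lemma~\ref{CollarThm}. By Gauss--Bonnet, $\area(X) = 2\pi(2g-2+n)$, so $4\area(X) = 8\pi(2g-2+n)$ and each of the three target bounds is logarithmic in the topological complexity. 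The geodesics of length $\le 2\arcsinh(1)$ are placed into $\Gamma$ at the outset; they satisfy the first bullet trivially, since $2\arcsinh(1) < 2\log(4\area(X))$.

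First I would build $\Gamma$ by a greedy/recursive procedure: at a point $x$ of the current thick part, Lemma~\ref{lem:Bavard} produces a geodesic loop $\delta_x$ with $\ell_X(\delta_x) \le 2R_{g,n}$; if its free homotopy class is a new simple closed geodesic disjoint from the curves already chosen, I add that geodesic $\gamma$ to $\Gamma$. Since the geodesic representative is length-minimizing in its class, $\ell_X(\gamma) \le 2R_{g,n}$, and the first bullet reduces to the numerical inequality $R_{g,n} \le \log(4\area(X))$. This is pure arithmetic: from $\arcch(y) \le \log(2y)$ and Jordan's inequality $\sin(\pi/N) \ge 2/N$ (with $N = 12g-6+6n$) one gets $R_{g,n} \le \log(N/2) = \log\!\big(3(2g-1+n)\big)$, and $3(2g-1+n) \le 8\pi(2g-2+n)$ holds whenever $2g-2+n \ge 1$.

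Once the curves are fixed, the arcs $\mathcal{A}$ are chosen as orthogeodesic seams cutting the complementary thick pieces into right-angled hexagons, and the last two bullets are length estimates for these seams. The crucial geometric input is Lemma~\ref{lem:boundlength0}: applied to a (simple) Bavard loop at a point $x$ freely homotopic to a curve $\gamma$, it gives $d_X(x, \mathcal{C}_{w_\gamma}) < \log(\sinh(R_{g,n})) < \log(4\area(X))$. After the greedy step has exhausted the new closed geodesics, this shows that every point of the thick part lies within a bounded distance of some collar or cusp boundary, which bounds the ``width'' of each thick piece and hence the length of the truncated seams $a^t$, giving the $6\log(4\area(X))$ estimate. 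The loop $\gamma_a$ is then controlled by exhibiting a representative of its homotopy class that combines such a width-bounded crossing with arcs running along the collar boundaries of its two endpoint curves---whose lengths $\ell_X(\gamma)\cosh(w(\gamma))$ are themselves bounded---and passing to the geodesic representative yields the $8\log(4\area(X))$ estimate.

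The main obstacle is the cusped adaptation rather than any single estimate. Three points need care: (i) a Bavard loop need not be simple, whereas Lemma~\ref{lem:boundlength0} is stated for simple loops, so the loops used must be surgered to simple representatives disjoint from the curves already chosen; (ii) arcs ending in cusps must be incorporated, where $\gamma_a$ is formed using the length-$2$ horocyclic arcs $h$, and the area estimates underlying both Lemma~\ref{lem:Bavard} and Parlier's diameter bound must be rerun with the cusp normalization; and (iii) the precise constants $2$, $6$, $8$ emerge only after tracking how many times a seam traverses the thick part and how much each collar crossing contributes, which is the genuinely bookkeeping-heavy part of the argument and where the interdependence of the $a^t$ and $\gamma_a$ bounds must be handled carefully.
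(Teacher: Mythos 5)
Your plan follows the same route as the paper's proof: truncate the thin part (standard cusp neighborhoods and collars of geodesics of length $\le 2\arcsinh(1)$), grow $\Gamma$ by repeated use of Lemma~\ref{lem:Bavard} together with Lemma~\ref{lem:boundlength0}, reduce the first bullet to $R_{g,n}\le\log(4\area(X))$ (your arithmetic for this is correct and fills in a step the paper only asserts), and then obtain the arcs as seams of bounded length across the width-bounded complementary pieces. Your treatment of $\gamma_a$ via collar boundary curves of length $\ell_X(\gamma)\cosh(w(\gamma))$ is a legitimate variant of the paper's direct bound on the concatenation $\gamma_1\ast a\ast\gamma_2\ast a^{-1}$, and is in fact the more robust way to handle arcs ending on very short curves.

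The genuine gap is in your greedy step, and it is not the issue you flag in your point (i). The paper does not apply Bavard's lemma at an arbitrary point and then test whether the resulting geodesic is ``new and disjoint''; it only applies it at points $x$ with $d_{X_k}(x,\partial X_k)\ge \log(4\area(X))$. This far-point selection rule is the mechanism that makes everything work at once: the shortest based loop $\delta_x$ is automatically simple (no surgery is needed), it lies in $X_k$ and hence is disjoint from all previously chosen geodesics, so its geodesic representative $\delta$ is too, and by Lemma~\ref{lem:boundlength0} $\delta$ lies within distance $\log(2\area(X))+w(\delta)<\log(4\area(X))$ of $x$, hence strictly inside $X_k$ --- so $\delta$ can always be added. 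Consequently the process terminates exactly when every point of $X_k$ is within $\log(4\area(X))$ of $\partial X_k$, which is precisely the density statement that the $2\log$, $6\log$ and $8\log$ bounds require. With your formulation (``add the geodesic if it is new and disjoint''), termination means only that no point yields an addable geodesic; at such a point the Bavard geodesic may cross curves already in $\Gamma$, and then Lemma~\ref{lem:boundlength0} only places $x$ near the collar of \emph{that} geodesic, giving a density constant of roughly $\log(2\area(X))+R_{g,n}$ plus collar terms, strictly worse than $\log(4\area(X))$; the constants in the proposition (and hence in Theorem~\ref{thm:main}) would degrade accordingly. Your surgery proposal does not repair this: surgery at self-intersections shortens a loop, but it cannot make a loop disjoint from previously chosen curves, which is instead guaranteed by intersection-minimality of geodesic representatives of loops contained in $X_k$. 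A secondary omission: you assert that width-bounded pieces admit hexagon-producing orthogeodesic seams of comparable length, but producing them is a construction in its own right --- the paper takes the arcs dual to the cut locus of the Vorono\"i decomposition of $X'$ around the curves of $\Gamma$, adding isotopic arcs when the cut locus fails to be trivalent.
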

    
\begin{proof}[Proof]
     Consider $X$ in $\teich_{g,n}$ with $2g-2+n>0$. Let $\Gamma_0$ denotes the set of simple closed geodesics in $X$ of length less than $2\arcsinh(1)$. Let $c$ be a cusp in $X$ and let $N_2(c)$ be the standard cusp neighborhood around $c$. Recall that by Lemma~\ref{CollarThm} the collars $\mathcal{C}_{w_{\gamma}}$ and cusp neighborhoods $N_2(c)$ are disjoint. Let \[X_0:=X\smallsetminus \{\mathcal{C}_{w_{\gamma}}, \; N_2(c)\; |\; \forall \gamma \in \Gamma_0, \; \forall \text{ cusp } c \text{ on } X\}.\] 
     
     We iterate the following process, starting with $k=0$. We choose a point $x \in X_k$ such that \[d_{X_k}(x, \partial X_k)\geq \log(4\area(X)).\] We consider $\delta_x$ the shortest non trivial loop based in $x$, by Lemma \ref{lem:Bavard} its length is bounded above by: \[\ell_X(\delta_x)\leq  2R_{g,n}.\] Since $\area(X)=2\pi(2g-2+n)$ note that $R_{g,n}\leq \log(4\area(X))$. We consider the unique geodesic $\delta$ freely homotopic to $\delta_x$. By Lemma \ref{lem:boundlength0} \[d_{X_k}\left(x, \mathcal{C}_{w_\delta}\right) \leq  \log\left( \sinh(R_{g,n})\right)\leq  \log(2\area(X)),\] where the last inequality comes from comparing the two functions. It follows that $\delta$ lies in $X_k$ and is not a boundary curve of $X_k$ since \[d_{X_k}(\delta, \partial X_k)>\log(4\area(X))-(\log(2\area(X)+ w(\delta)))>0,\] the last inequality follows from comparing the two functions. Set \[X_{k+1}:=X_k \smallsetminus \delta \] and repeat the process until all $x \in X_k$ satisfy \[d_{X_k}(x, \partial X_k)\leq  \log(4\area(X)).\] 
     
     Let $\Gamma$ be the set of all disjoint simple closed geodesics we've cut $X$ with, i.e., the $\delta$s above and $\Gamma_0$. Note that for all $\gamma \in \Gamma$ we have \[\ell_X(\gamma)\leq 2\log(4\area(X)).\] 
     
     Let $X':=X_0\smallsetminus \Gamma$ and consider the Voronoï cell decomposition of $X$ around the curves in $\Gamma$. This corresponds to associating each point in $X'$ with its closest element in $\Gamma$ (a point may have multiple closest elements). Let $x$ be an element in the Voronoï cell corresponding to ${\gamma}$ then, by construction,  \[d_{X'}(x, \gamma) \leq \log(4\area(X)).\] Consider the cut locus of the Voronoï cell decomposition, dual to each edge we construct an arc between the corresponding curves of $\Gamma$. If the cut locus is trivalent then the resulting arcs decomposition is a hexagon decomposition. If it is not trivalent we can add arcs isotopic to the arcs dual to the edges of the cut locus.
     
     We denote by $\mathcal{A}$ the set of resulting orthogeodesic arcs which gives an hexagon decomposition together with $\Gamma$. Note that by construction the length of the orthogeodesic arcs $a \in \mathcal{A}$ in $X'$ are bounded above by $2\log(4\area(X)).$
     
     Now consider the curves $\gamma_a$ for $a \in \mathcal{A}$. For instance say that $a$ ends on two simple closed curves $\gamma_1$ and $\gamma_2$. Recall that each curve in $\Gamma$ has length at most $2R_{g,n}$. Since $\gamma_a$ is in the homotopy class of $\gamma_1 \ast a \ast \gamma_2 \ast a^{-1}$, we conclude that \[\ell_X(\gamma_a)\leq  2(2R_{g,n} + 2\log(4\area(X)))\leq8\log(4\area(X)).\] The same reasoning applies to each type of $\gamma_a$, and the proof follows.
\end{proof}

\begin{remark}\label{smallhex_arcs_length}
    Let $a$ be an arc in $\mathcal{A}$ ending on two simple closed curves $\gamma_1,\;\gamma_2\in \Gamma$. By the proof of the theorem above, if $\ell_X(\gamma_1),\;\ell_X(\gamma_2) > 2\arcsinh(1)$, then \[\ell_X(a)\leq 6\log(4\area(X)).\]
    Otherwise, if $\ell_X(\gamma_1),\;\ell_X(\gamma_2) \leq 2\arcsinh(1)$,  then \[\ell_X(a)\leq 6\log(4\area(X)) + w(\gamma_1) + \; w(\gamma_2).\] If $\ell_X(\gamma_1) \leq 2\arcsinh(1)$ and $\;\ell_X(\gamma_2) > 2\arcsinh(1)$, then \[\ell_X(a)\leq 6\log(4\area(X)) + w(\gamma_1).\]
\end{remark}

\section{Bounding shears from above}

\subsection{Preliminary computations}

Let  $X \in \teich_{g,n}$ with $2g - 2 + n > 0$. Recall that we are interested in finding an upper bound for $S(X)$, i.e., in finding an ideal triangulation of $X$ with shear parameters that are bounded by a constant only depending on the topological type of $X$. When $n>0$, we obtain a rough upper bound by adapting an argument from~\cite[Theorem~1.3]{Parlier2016}. Suppose that $X$ has $n$ punctures, denoted by $p_1, \ldots, p_n$. Each puncture corresponds to a cusp $c_i$ on $X$. We consider the \emph{Voronoï cell decomposition} of $X$ with respect to its puncture set. That is, we associate to each point of $X$ the puncture(s) to which it is closest. 

Hence each puncture $p_i$ has an associated cell $V_i$. For each $i = 1, \ldots, n$, the Voronoï cell $V_i$ is a polygonal region surrounding $p_i$, and its \emph{cut locus} (or boundary) consists of points equidistant from two or more punctures. Let $h_i$ be the horocycle bounding the standard cusp neighborhood $N(c_i)$. To each edge of the cut locus we associate a length-minimizing geodesic arc connecting the corresponding horocycles $h_i$. Extending these arcs to the punctures gives \emph{the dual graph of the Voronoï cell decomposition}. 

If the cut locus is trivalent the dual graph defines an \emph{ideal triangulation} of $X$.
\begin{figure}[h]
    \centering
    \begin{overpic}[width=.6\linewidth,keepaspectratio]{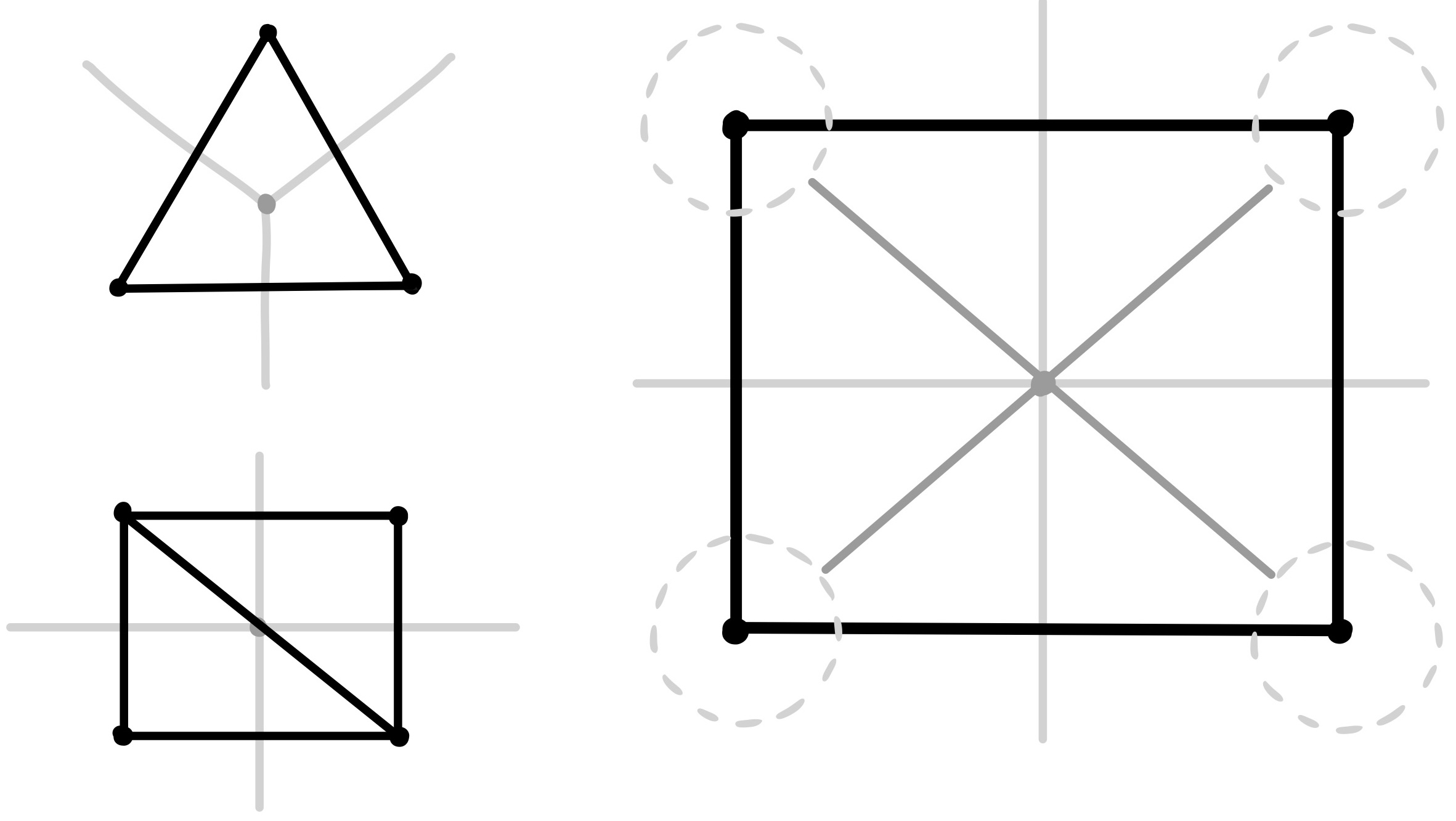}
    \put(17,56){$p_1$}
    \put(6,33){$p_2$}
    \put(29,33){$p_3$}
    \put(62,56){\color{gray} \text{cut locus }}
    \put(93,32){\text{dual graph}\color{black}}
    \put(62,33){\color{gray}$a_1$}
    \put(63,20){\color{gray}$a_4$}
    \put(80.5,23){\color{gray}$a_3$}
    \put(78,33){\color{gray}$a_2$}
    \put(40.5,52){\color{gray}$h_1$}
    \put(70,32){\color{gray}$v$}
\end{overpic}\vspace*{-0.25cm}
    \caption{Construction of an ideal triangulation from a Voronoï cell decomposition}
    \label{Voronoi2}
\end{figure}
Otherwise if the cut locus is not trivalent, to obtain an ideal triangulation we proceed as follows. Let $v$ be a vertex of the cut locus of degree $\deg(v)\geq 4$ and let $(a_i)_{i=1}^m$ be the family of length minimizing arcs from $v$ to each of the $h_i$'s whose Voronoï cell touches $v$. Fix one of the $a_i$'s, say $a_1$, and consider the arcs obtained by concatenating $a_1$ with $a_3, \ldots, a_{m-1}$ and then add them to the dual graph of the Voronoï cell decomposition (see Figure~\ref{Voronoi2}). Repeat this process for all the vertices of the cut locus with degree greater than 3 to produce an augmented dual graph. By prolonging the arcs of the augmented dual graph, one obtains an ideal triangulation $\mathcal{T}$ of $X$. 

To show that the shear points along the ideal arcs in $\mathcal{T}$ are not contained in some cusp neighborhoods, we first define \(\delta_1\) as the area of a region in an ideal triangle delimited by its inscribed circle and its contact triangle. Specifically, we have:
\begin{lemma}\label{lem:areaDisk}
    Let $\delta_1$ be the area of a region in an ideal triangle delimited by its incribed circle and its contact triangle, then \[ \delta_1 = \pi\left(\frac{2}{3} \cosh\left(2\rho\right)-1\right)-1\approx0.2768065 .\]
\end{lemma}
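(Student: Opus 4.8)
The plan is to exploit the threefold symmetry of an ideal triangle to reduce the problem to two standard area computations. First I would normalize, placing the ideal triangle at the vertices $(0,1,\infty)$ in the upper half-plane. Then the three mutually tangent horocycles are the line $\{y=1\}$ together with the two Euclidean circles of diameter $1$ resting on the real axis at $0$ and at $1$; their pairwise tangency points are the three shear points, and they bound the contact triangle. The inscribed circle is the unique (Euclidean) circle through the three shear points, and a direct check shows its hyperbolic radius equals $2\rho$, i.e.\ $\cosh(2\rho)=\tfrac{2}{\sqrt3}$. Crucially, the ideal triangle admits an orientation-preserving isometry of order three---the elliptic map $z\mapsto \tfrac{1}{1-z}$, which fixes the incenter and cyclically permutes $0,1,\infty$---and this isometry preserves both the inscribed disk and the contact triangle while cyclically permuting the three shear points.

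The region between the inscribed circle and the contact triangle is the inscribed disk with the contact triangle removed. Since the contact triangle meets the inscribed circle only at the three shear points, this region has three connected components, one adjacent to each side of the contact triangle, and the order-three isometry permutes them cyclically; hence they are pairwise isometric and $\delta_1$ is exactly one third of $\area(\text{disk})-\area(\text{contact triangle})$. It then remains to compute the two areas. The inscribed disk has hyperbolic radius $2\rho$, so the standard hyperbolic disk-area formula gives $\area(\text{disk})=2\pi(\cosh(2\rho)-1)$. For the contact triangle I would start from the fact that every ideal triangle has area $\pi$ and subtract the three horocyclic horns, each bounded by a side of the contact triangle and the two ideal edges meeting at the corresponding vertex. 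Each horn is isometric to the cusp region $\{0\le x\le 1,\ y\ge 1\}$ at $\infty$, whose area is $\int_0^1\!\int_1^\infty y^{-2}\,dy\,dx=1$; by symmetry all three horns have area $1$, so $\area(\text{contact triangle})=\pi-3$. Combining,
\[
\delta_1=\tfrac13\Big(2\pi(\cosh(2\rho)-1)-(\pi-3)\Big)=\pi\Big(\tfrac23\cosh(2\rho)-1\Big)+1,
\]
which, upon substituting $\cosh(2\rho)=\tfrac{2}{\sqrt3}$, evaluates to the asserted $\approx 0.2768065$ (note that the additive constant coming out of this computation is $+1$, which is what is needed to match the stated numerical value).

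The only delicate point is the reduction by a factor of three: it rests on pinning down the order-three symmetry and on the observation---easy to misjudge from the Euclidean picture, where the three components look quite different---that the components are nonetheless hyperbolically isometric and so have equal area. Once that is in hand the rest is routine, using only the ideal-triangle area $\pi$, the elementary horn integral, and the hyperbolic disk-area formula, with $\cosh(2\rho)=2/\sqrt3$ making the final arithmetic immediate.
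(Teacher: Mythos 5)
Your proof is correct and follows essentially the same route as the paper: both reduce to $\delta_1 = \tfrac{1}{3}\left(\area(C_{\Delta})-\area(T_{\Delta})\right)$ with $\area(C_{\Delta})=2\pi(\cosh(2\rho)-1)$ and $\area(T_{\Delta})=\pi-3$, the only cosmetic differences being that the paper obtains the spike area $1$ via Gauss--Bonnet where you integrate $\int_0^1\int_1^\infty y^{-2}\,dy\,dx$ directly, and that you make explicit the order-three symmetry justifying the division by $3$, which the paper leaves implicit. You are also right about the sign: the simplified closed form should end in $+1$ rather than the stated $-1$ (the paper's final simplification carries the same slip), but its intermediate expression $\tfrac{1}{3}\left(2\pi(\cosh(2\rho)-1)-(\pi-3)\right)$ and the numerical value $\approx 0.2768065$ agree with your corrected formula.
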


\begin{proof}
    Consider an ideal triangle $\Delta=(0,1,\infty)$ in $\Hh$. We compute the area, denoted by $\delta_1$, of the circular sector delimited by its inscribed circle $C_{\Delta}$ of radius $2\rho$ and the (non-geodesic) contact triangle $T_{\Delta}$. Let $R$ be a spike region in $\Delta$ delimited by two consecutives sides and its contact triangle. First observe that 
    \[\area(T_{\Delta})=\area(\Delta)-3\area(R),\] then by Gauss-Bonnet \[\area(\Delta)=\pi \quad \area(R)=\pi -\pi/2-\pi/2 + 1=1,\]
    and so we obtain
    \begin{align*}
        \delta_1&=\frac{1}{3}\left(\area(C_{\Delta})-\area(T_{\Delta})\right)=\frac{1}{3}\left(2\pi \left( \cosh\left(2\rho\right) - 1 \right)-(\pi-3)\right)\\ 
        &= \pi\left(\frac{2}{3} \cosh\left(2\rho\right)-1\right)-1\approx0.2768065.
    \end{align*}

\end{proof}
Set \(\delta'_1 = 0.27 < \delta_1\). For each ideal arc $a$ of the ideal triangulation $\mathcal{T}$, we denote by $a^T$ its truncation at the cusp neighborhoods $N_{\delta'_1}(c_i)$. We show that the shear points along the arc $a$ are not contained in $N_{\delta'_1}(c_i)$, hence they are distant at most by the length of the subarc $a^T$; that is the shear parameter along $a$ is smaller or equal to the length of $a^T$. 

Let $\Delta$ be an ideal triangle with $a$ as one of its edges, and let $s$ be the shear point of $\Delta$ on $a$. We show that $s$ does not lie in the cusp neighborhoods $N_{\delta'_1}(c_i)$. 
    \begin{figure}[h]
        \centering
        \begin{overpic}[width=0.55\linewidth,keepaspectratio]{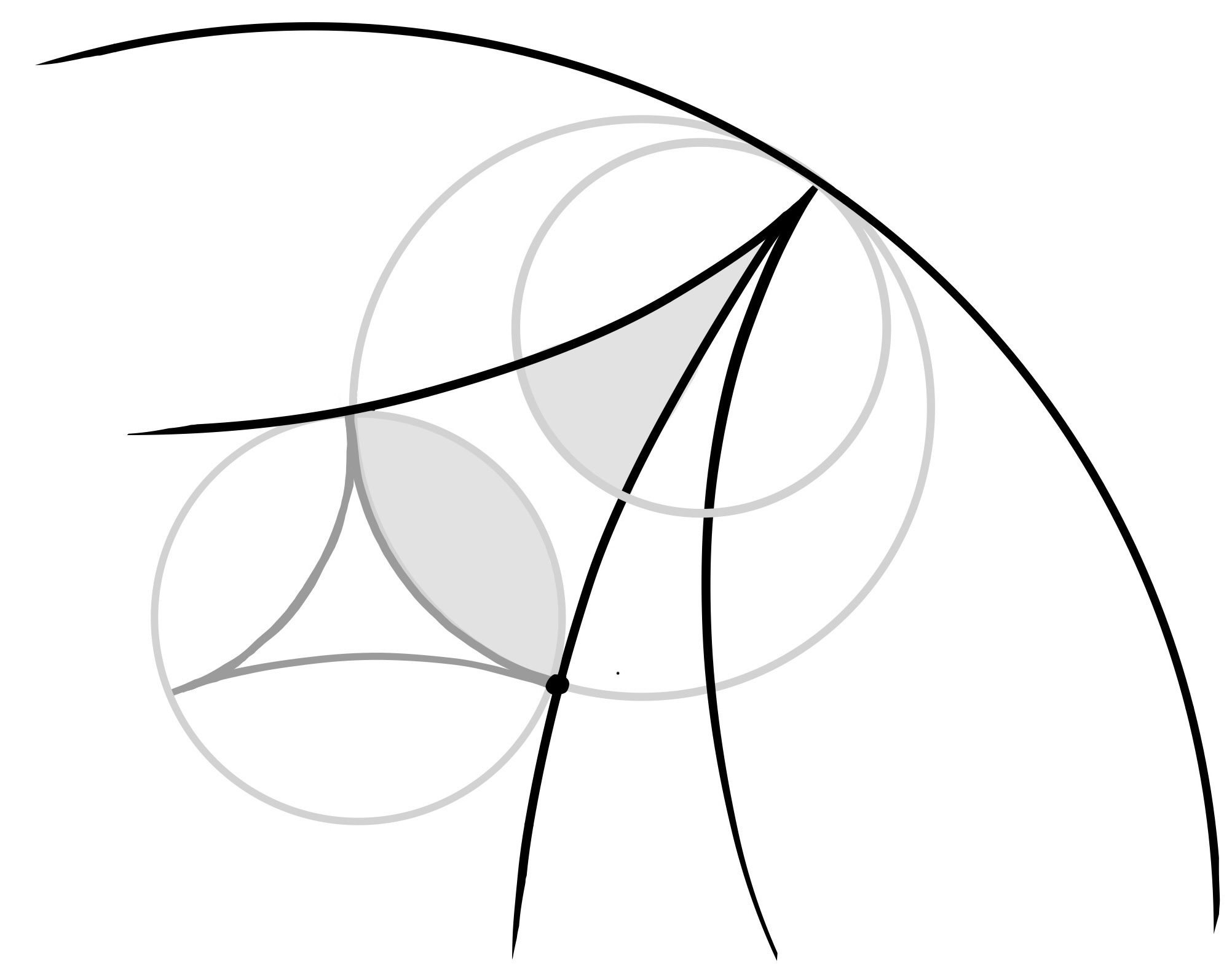}
        \put(36,34){{\color{Gray}$\mathcal{D}$}}
        \put(15,46){{$\Delta$}}
        \put(47,46){{\color{Gray}$R$}}
        \put(5,30){{\color{Gray}$C_{\Delta}$}}
        \put(29,21){{\color{Gray}$T_{\Delta}$}}
        \put(44.5,57){{\color{lightgray}$h_s$}}
        \put(46,20){$s$}
    \end{overpic}\vspace*{-0.25cm}
        \caption{Computation of the area between the inscribed circle and the contact triangle in an ideal triangle}
        \label{ComparingArea}
    \end{figure}
Assume by contradiction that the shear point $s$ lies in $N_{\delta'_1}(c_i)$. Recall that the shear point can also be obtained by foliating each spike with horocycles and so $s$ lies on a horocycle $h_s$ of length $\leq \delta'_1$. On one hand, by Lemma~\ref{lem:areaDisk} the area of the region $\mathcal{D}$ in $\Delta$ delimited by its inscribed circle and its contact triangle is $\delta_1$, see Figure~\ref{ComparingArea}. On the other hand, the area of the cusp neighborhood cut off by $h_s$ satisfies \[\area(N_{h_S}(c_i)) \leq \area(N_{\delta'_1}(c_i))=\int_{1/\delta'_1}^{\infty}\int_{0}^{1} \frac{dxdy}{y^2}= \delta'_1.\] Then, for the shear point to be contained in the spike region delimited by $h_s$ we need the entier region $\mathcal{D}$ to be contained in it. Since $\delta'_1<\delta_1$ the shear point $s$ cannot lie in $N_{\delta'_1}(c_i)$. We deduce that the shear along an ideal arc $a\in T$ is bounded above by the length of the subarc $a^T$: \[S(X)\leq  \max_{a \in \mathcal{T}} \ell_X(a^T).\] Take a neighborhood $U$ of width $\frac{\sys(X)}{2}$ around $a^T$. Then by a standard area formula, see for instance~\cite[Lemma 3.6]{buser2018}, \[\area(U)=\left(2\ell_X(a^T)\sinh\left(\frac{\sys(X)}{2\cdot 2}\right)+\pi\left(\cosh\left(\frac{\sys(X)}{2\cdot 2}\right)-1\right)\right)\] and since $\area(U)\leq \area(X)-n \cdot 0.27$ we deduce that: \[S(X)\leq \ell_X(a^T) \leq \frac{2\pi(2g-2+n)-0.27\cdot n - \pi\left(\cosh\left(\frac{\sys(X)}{4}\right)-1\right)}{2\sinh(\frac{\sys(X)}{4})}.\] 

Hence we obtain a bound for the minimax shear of a punctured surface depending on the systole and linearly on the area. Jiang established an upper bound that depends on the systole and on the genus times the Bers constant (which is only known to depend linearly on the area)~\cite{ManmanJiang2021}. By considering not only ideal arcs ending at cusps but also geodesic arcs spiralling around simple closed curves, we improve our result and extend it to any hyperbolic surface: our bound depends logarithmically on the area and no longer on the systole. The proof follows the same strategy, 
using an area argument to locate regions where the shear points cannot occur, and then estimating the shear along arcs by considering the subarcs that lie outside these regions. 

To prove the main theorem we will refine Lemma~\ref{lem:areaDisk}. In order to do this we need the following tool which concerns the \emph{injectivity radius} of a point in a cusp neighborhood. 

Recall that the injectivity radius of $X$ at a point $B$ is the radius of the largest open metric disk embedded at $B$; it is denoted by $r_B(X)$, one has
\[
r_B(X) = \tfrac{1}{2}\,\ell(\mu_B),
\]
where $\mu_B$ denotes the shortest geodesic loop based at $B$, see for instance~\cite[Lemma~4.1.5]{Buser}. The result below may be well-known, but we include a proof for the sake of completeness.

\begin{lemma}\label{lem:InjRadiusLengthCusp}
    Let $X$ be a hyperbolic surface with at least one puncture, then a point with injectivity radius $r$ in a cusp neighborhood of $X$ lies on a horocycle of length $2\sinh(r)$.
\end{lemma}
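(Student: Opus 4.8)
The plan is to work in the upper half-plane model $\Hh^2$ with the standard cusp represented by the region above the horocycle $\{y = c\}$ for some height $c$, where the cusp group is generated by the parabolic translation $z \mapsto z+1$. In this normalization, the horocycles are horizontal lines $\{y = \text{const}\}$, and a horocycle at Euclidean height $y$ has hyperbolic length equal to $1/y$ (since the quotient identifies $x$ with $x+1$, the length is $\int_0^1 \frac{dx}{y} = \frac{1}{y}$). So the statement reduces to computing, for a point $B$ lying on the horocycle of height $y$, the length of the shortest geodesic loop based at $B$, and then equating $r_B(X) = \tfrac{1}{2}\ell(\mu_B)$ with the claimed $r$.

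\textbf{Key steps.} First I would fix the lift so that $B = iy$ lies on the imaginary axis at height $y$. The geodesic loops based at $B$ correspond to the nontrivial elements of the cusp subgroup, i.e.\ to the translations $z \mapsto z + m$ for nonzero integers $m$; the shortest one is $m = \pm 1$. So $\mu_B$ is the geodesic loop whose endpoints are $B = iy$ and its translate $B + 1 = iy + 1$. Second, I would compute the hyperbolic distance between $B$ and $B+1$ using the standard formula
\[
\cosh d_{\Hh^2}(z_1, z_2) = 1 + \frac{|z_1 - z_2|^2}{2\,\ima(z_1)\,\ima(z_2)}.
\]
With $z_1 = iy$, $z_2 = 1 + iy$, we have $|z_1 - z_2| = 1$ and $\ima(z_1) = \ima(z_2) = y$, giving $\cosh d = 1 + \frac{1}{2y^2}$. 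Third, this distance $d$ is precisely $\ell(\mu_B)$, so $r = r_B(X) = \tfrac{1}{2} d$, and I would invert to express $y$ in terms of $r$: from $\cosh(2r) = 1 + \frac{1}{2y^2}$ and the identity $\cosh(2r) - 1 = 2\sinh^2(r)$, I get $2\sinh^2(r) = \frac{1}{2y^2}$, hence $\frac{1}{y} = 2\sinh(r)$. Finally, since the horocycle through $B$ has length $1/y$, its length equals $2\sinh(r)$, which is exactly the claim.

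\textbf{Main obstacle.} The computation itself is routine; the only genuinely delicate point is the identification of the shortest geodesic loop at $B$ as coming from the primitive parabolic generator. One must argue that no geodesic loop based at $B$ that leaves the cusp neighborhood and returns can be shorter. This follows because any deck transformation $\phi$ with $\phi \neq \mathrm{id}$ gives a loop of length $d_{\Hh^2}(B, \phi B)$, and within a standard cusp neighborhood the parabolic elements of the cusp subgroup realize the minimum: any hyperbolic isometry whose axis must travel down and out of the cusp moves $B$ farther, because the horocyclic thin part is embedded (by the Collar Theorem, Lemma~\ref{CollarThm}, cusp neighborhoods are disjoint and embedded), so the only group elements acting on the relevant lift are powers of the parabolic. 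I would state this cleanly by invoking that inside an embedded standard cusp neighborhood the fundamental group is generated by the single parabolic, so the systolic loop at $B$ is forced to be $m = \pm 1$. With that identification in hand, the remainder is the elementary hyperbolic-trigonometry manipulation above.
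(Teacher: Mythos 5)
Your proof is correct and takes essentially the same route as the paper's: normalize the cusp so the holonomy is $z \mapsto z+1$, apply the distance formula to the two lifts $(0,y)$ and $(1,y)$ of $B$, and invert $\cosh(2r) = 1 + \tfrac{1}{2y^2}$ to obtain $\tfrac{1}{y} = 2\sinh(r)$. Your closing discussion of why the shortest geodesic loop at $B$ comes from the primitive parabolic merely makes explicit a point the paper's proof leaves implicit.
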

\begin{proof}
    Let $B$ be a point in a cusp neighborhood of $X$ with injectivity radius $r$. Recall that there exists a geodesic loop of length $2r$ at $B$. We consider a lift of the cusp neighborhood in the upper half plan so that $B$ has two lifts $\widetilde{B}_1=(0,y)$ and $\widetilde{B}_2=(1,y)$. Then, \[\cosh(2r)=\cosh(d_\Hh(\widetilde{B}_1, \widetilde{B}_2))=1+\frac{(1-(-0))^2}{2y^2},\] and so $\frac{1}{y}=2\sinh(r)$ and $B$ lies on a horocycle of length $2\sinh(r)$.
\end{proof}

Now we refine the discussion around Lemma~\ref{lem:areaDisk}. First, we define the \emph{shear point free parts}, that is, regions where we can guarantee that the shear points do not lie. For any fixed $\rho' < \rho$ set
\[
\delta_2 = \frac{2\sinh(\rho')}{e^\rho}, \qquad \text{ and } \qquad
\delta_3 = \operatorname{arcsinh}(\rho').
\]

\begin{definition}\label{def:shearfree}
Let $X$ be a hyperbolic surface in $\teich_{g,n}$. The \emph{shear point free parts} of $X$ are defined as follows:
\begin{itemize}
    \item[-] The cusp neighborhoods $N_{\delta_2}(c)$ bounded by the horocycle of length $\delta_2$ around each cusp $c$.
    \item[-] The collars $\mathcal{C}_{w_\gamma^T}(\gamma)$ of width
    \[
    w_\gamma^T = \arcch\left(\frac{2\sinh(\delta_3)}{\ell_X(\gamma)}\right) - \rho
    \]
    around each simple closed curve $\gamma \in \Gamma$, for which $0 < \ell_X(\gamma) \leq 2\tanh(\rho)$.
\end{itemize}
\end{definition}

\begin{proposition}\label{lem:ShearFree}
    Let $X$ be a hyperbolic surface in $\teich_{g,n}$ and $\mathcal{T}$ be an ideal triangulation on $X$. Then, the shear points of $\mathcal{T}$ do not lie in the shear point free parts of $X$.
\end{proposition}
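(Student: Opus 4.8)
The plan is to handle both types of shear point free parts by a single mechanism: exhibit an embedded hyperbolic disk sitting inside the inscribed circle of radius $2\rho$, bound the injectivity radius at its center from below, and then convert that into a lower bound for the distance of the shear point from the cusp (respectively the short geodesic), using Lemma~\ref{lem:InjRadiusLengthCusp} for cusps and its collar analogue for curves. For the common setup, let $s$ be a shear point of $\mathcal{T}$ lying on an edge $\lambda$ that bounds an ideal triangle $\Delta$, and let $O$ be the center of the inscribed circle of $\Delta$, so that $d(s,O)=2\rho$. Since the complementary regions of the lamination $\mathcal{T}$ are embedded ideal triangles (a lift of $\Delta$ to $\Hh^2$ has trivial stabilizer, as a nontrivial isometry of a hyperbolic surface cannot fix an ideal triangle), the open inscribed disk $D(O,2\rho)$ is embedded in $X$. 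Let $P$ be the midpoint of the geodesic segment $[s,O]$; then $d(s,P)=d(P,O)=\rho$, and since $D(P,\rho)\subset D(O,2\rho)$ the disk $D(P,\rho)$ is embedded as well, so the injectivity radius of $X$ at $P$ is at least $\rho$.

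For the cusp case I would argue by contradiction: suppose $s\in N_{\delta_2}(c)$. Since $\delta_2<2$ and $d(s,P)=\rho$, the point $P$ still lies in the standard cusp neighborhood of $c$ (one checks $\delta_2\le 2e^{-\rho}$). Applying Lemma~\ref{lem:InjRadiusLengthCusp} to $P$ shows that $P$ lies on a horocycle of length $2\sinh(r)\ge 2\sinh(\rho)$, where $r\ge\rho$ is its injectivity radius. The Busemann function of the cusp, $z\mapsto -\log(\operatorname{Im}z)$ in the half-plane model with the cusp at $\infty$, is $1$-Lipschitz, so the horocyclic length through $s$ is at least $e^{-d(s,P)}=e^{-\rho}$ times the horocyclic length through $P$. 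Hence $s$ lies on a horocycle of length at least $2\sinh(\rho)\,e^{-\rho}>2\sinh(\rho')\,e^{-\rho}=\delta_2$, contradicting $s\in N_{\delta_2}(c)$.

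For the collar case, let $\gamma\in\Gamma$ with $\ell\coloneqq\ell_X(\gamma)\le 2\tanh(\rho)$ and suppose for contradiction that $d(s,\gamma)\le w_\gamma^T$. Then $d(P,\gamma)\le w_\gamma^T+\rho=\arcch(2\rho'/\ell)$, which is smaller than the maximal collar half-width $w(\gamma)=\operatorname{arcsinh}(1/\sinh(\ell/2))$, so $P$ lies in the embedded collar of $\gamma$. There the injectivity radius at distance $d$ from $\gamma$ equals $\operatorname{arcsinh}\!\big(\cosh(d)\sinh(\ell/2)\big)$ — half the displacement of the holonomy of $\gamma$ at distance $d$ from its axis — so from $r(P)\ge\rho$ we get $\cosh(d(P,\gamma))\ge \sinh(\rho)/\sinh(\ell/2)$. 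Combining this with $d(P,\gamma)\le d(s,P)+d(s,\gamma)=\rho+d(s,\gamma)$ gives
\[ d(s,\gamma)\ \ge\ \arcch\!\left(\frac{\sinh(\rho)}{\sinh(\ell/2)}\right)-\rho. \]
It then suffices to establish the elementary inequality $\frac{\sinh(\rho)}{\sinh(\ell/2)}>\frac{2\rho'}{\ell}$, for this yields $d(s,\gamma)>\arcch(2\rho'/\ell)-\rho=w_\gamma^T$, the desired contradiction. Rewriting it as $\frac{\ell/2}{\sinh(\ell/2)}>\frac{\rho'}{\sinh(\rho)}$ and using that $t\mapsto t/\sinh(t)$ is decreasing, the left side is at least $\frac{\tanh(\rho)}{\sinh(\tanh(\rho))}$ (since $\ell/2\le\tanh(\rho)$), which exceeds $\frac{\rho}{\sinh(\rho)}>\frac{\rho'}{\sinh(\rho)}$; this is precisely where the hypotheses $\ell\le 2\tanh(\rho)$ and $\rho'<\rho$ enter.

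I expect the collar case to be the main obstacle, since the cusp case is essentially a refinement of the area argument already carried out before Lemma~\ref{lem:InjRadiusLengthCusp}, with the injectivity-radius lemma and the Busemann estimate replacing the area comparison. For collars the two delicate points are: first, justifying the injectivity-radius-versus-distance formula inside the collar, i.e.\ that within the embedded collar the shortest geodesic loop at $P$ is the one winding once around $\gamma$, which I would deduce from the Collar Theorem~\ref{CollarThm} and the displacement function of a hyperbolic isometry; and second, the closing monotonicity estimate, which is tight — at $\rho'=\rho$ and $\ell=2\tanh(\rho)$ the inequality degenerates to an equality, so the threshold $2\tanh(\rho)$ in Definition~\ref{def:shearfree} and the strict requirement $\rho'<\rho$ are both essential and leave no room to spare.
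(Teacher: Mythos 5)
Your proof is correct and follows essentially the paper's strategy: the paper likewise observes that every shear point lies at distance $\rho$ from a point $B$ of injectivity radius at least $\rho$ (your midpoint $P$, for which you supply the embeddedness justification the paper leaves to a figure), and its cusp case is exactly yours, via Lemma~\ref{lem:InjRadiusLengthCusp} together with the $e^{\pm\rho}$ scaling of horocycle length. The only divergence is the collar case: the paper bounds $r_B$ from above by half the length of the equidistant loop through $B$, namely $\tfrac12\ell_X(\gamma)\cosh(w)$, and the definition of $w_\gamma^T$ makes this collapse to exactly $\rho'<\rho$ with no further estimate; you instead use the sharper holonomy-displacement bound $\sinh(r)\le\cosh(d)\sinh\bigl(\ell/2\bigr)$, which is why you need the closing monotonicity inequality and why the hypothesis $\ell\le 2\tanh(\rho)$ enters your argument explicitly (in the paper it is absorbed into the definition of $w_\gamma^T$). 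Two incidental points: only the upper-bound direction of your injectivity-radius formula is actually needed --- the holonomy of $\gamma$ projects to a geodesic loop at $P$ of length $2\operatorname{arcsinh}\bigl(\cosh(d)\sinh(\ell/2)\bigr)$, and $r(P)$ is at most half the length of any geodesic loop at $P$ --- so the ``delicate point'' you flag about the shortest loop winding once around $\gamma$ can be bypassed entirely; and your final inequality does not literally degenerate to equality at $\rho'=\rho$, $\ell=2\tanh(\rho)$ (it remains strict there, since $\tanh(\rho)<\rho$ and $t/\sinh(t)$ is strictly decreasing), though it is indeed nearly tight.
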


\begin{proof}
    We show that the shear point $s$ lying on ideal arcs in $\mathcal{T}$ does not lie in the shear point free parts. Observe that $s$ is at distance $\rho$ of a point $B$ with injectivity radius $r_B \geq \rho$, see Figure~\ref{BPointShear}.
    \begin{figure}[h]
        \centering
        \begin{overpic}[width=0.4\linewidth,keepaspectratio]{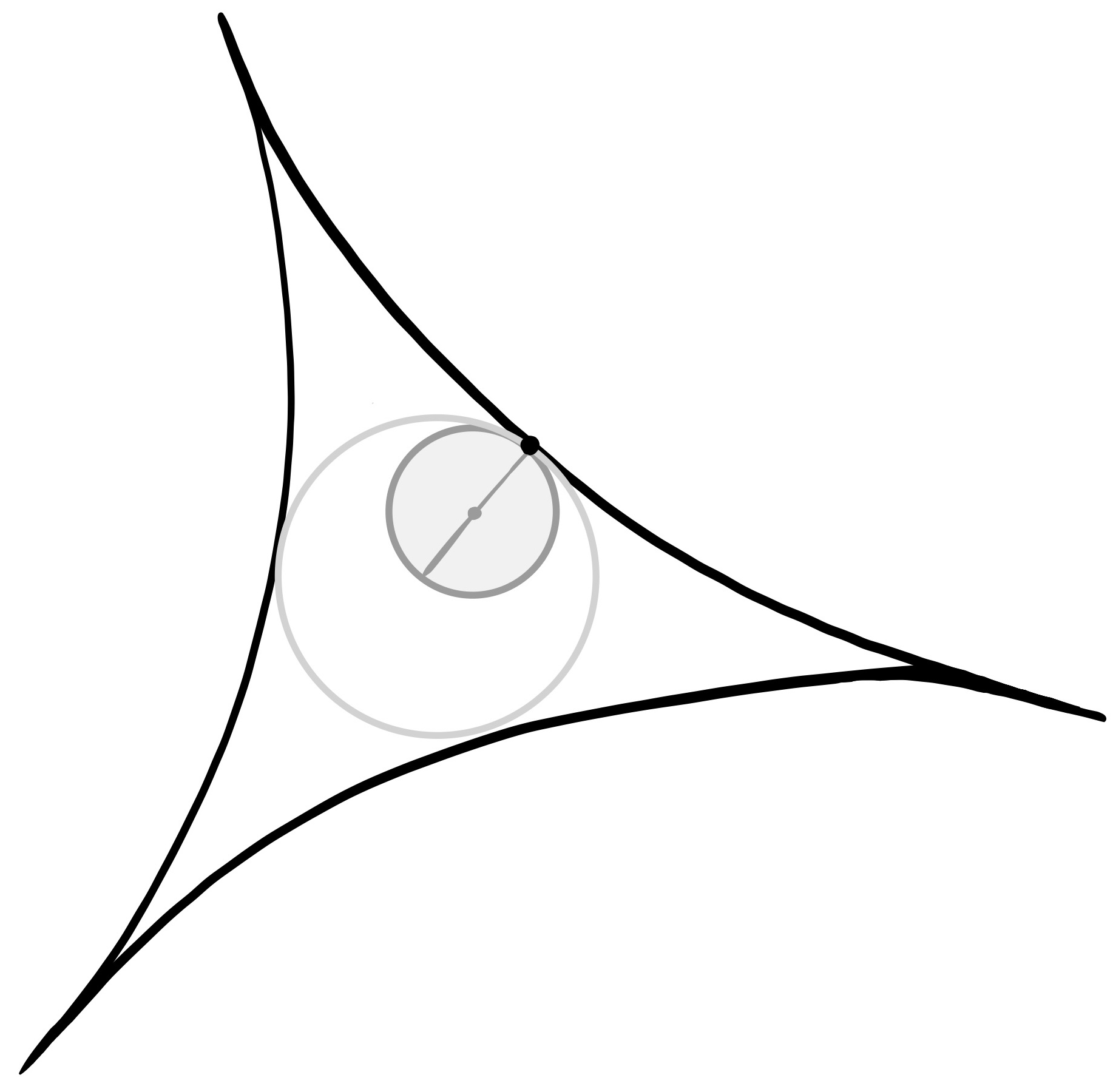}
        \put(49,59){$s$}
        \put(42.5,47){\color{Gray}$B$}
    \end{overpic}\vspace*{-0.25cm}
        \caption{Each shear point lies at distance $\rho$ of a point $B$ with injectivity radius $r_B \geq \rho$}
        \label{BPointShear}
    \end{figure}

    First we consider the case when the shear point free part is a cusp neighborhood $N_{\delta_2}(c)$. Assume by contradiction that $s$ lies in $N_{\delta_2}(c)$. Thus $s$ lies on a horocycle of length $\leq \delta_2$ and since $B$ is at distance $\rho$ of $s$, $B$ lies on a horocycle of length $\leq e^{\rho} \cdot \delta_2 = 2\sinh(\rho')$. On the other hand, by Lemma~\ref{lem:InjRadiusLengthCusp}, $B$ lies on a horocycle of length $2\sinh(r_B)\geq 2\sinh\left(\rho \right)$. Since $\rho'<\rho$ this is a contradiction.
   
    Finally, let $\gamma$ be a simple closed curve on each $X$ such that $0 < \ell_X(\gamma) \leq 2\tanh(\rho)$. Assume by contradiction that $s$ lies in the collar $\mathcal{C}_{w_\gamma^T}(\gamma)$. Again on one hand $s$ is at distance $\rho$ of a point $B$ with injectivity radius $r_B \geq \rho$. On the other hand, observe that \[w_{\gamma}^T + \rho < w(\gamma),\] and so by Lemma~\ref{CollarThm} the collars $\mathcal{C}_{w_\gamma^T}(\gamma)$ and $\mathcal{C}_{w_\gamma^T + \rho}(\gamma)$ are contained in the standard collar. Let $w$ be the distance from $B$ to $\gamma$ and note that $w$ satisfies \[w\leq w_{\gamma}^T + \rho.\] Observe that $B$ lies on the boundary curve of the collar $\mathcal{C}_w(\gamma)$ which has length $\ell_X(\gamma)\cosh(w)$, see Figure~\ref{Case1}. Recall that there exists a simple geodesic loop at $B$ of length $2r_B$. Thus \[r_B \leq \frac{1}{2}\ell_X(\gamma)\cosh(w)\leq \frac{1}{2}\ell_X(\gamma)\cosh(w_{\gamma}^T + \rho) = \sinh(\delta_3)=\rho',\] which is a contradiction.

    \begin{figure}[h]
        \centering
        \begin{overpic}[width=0.5\linewidth,keepaspectratio]{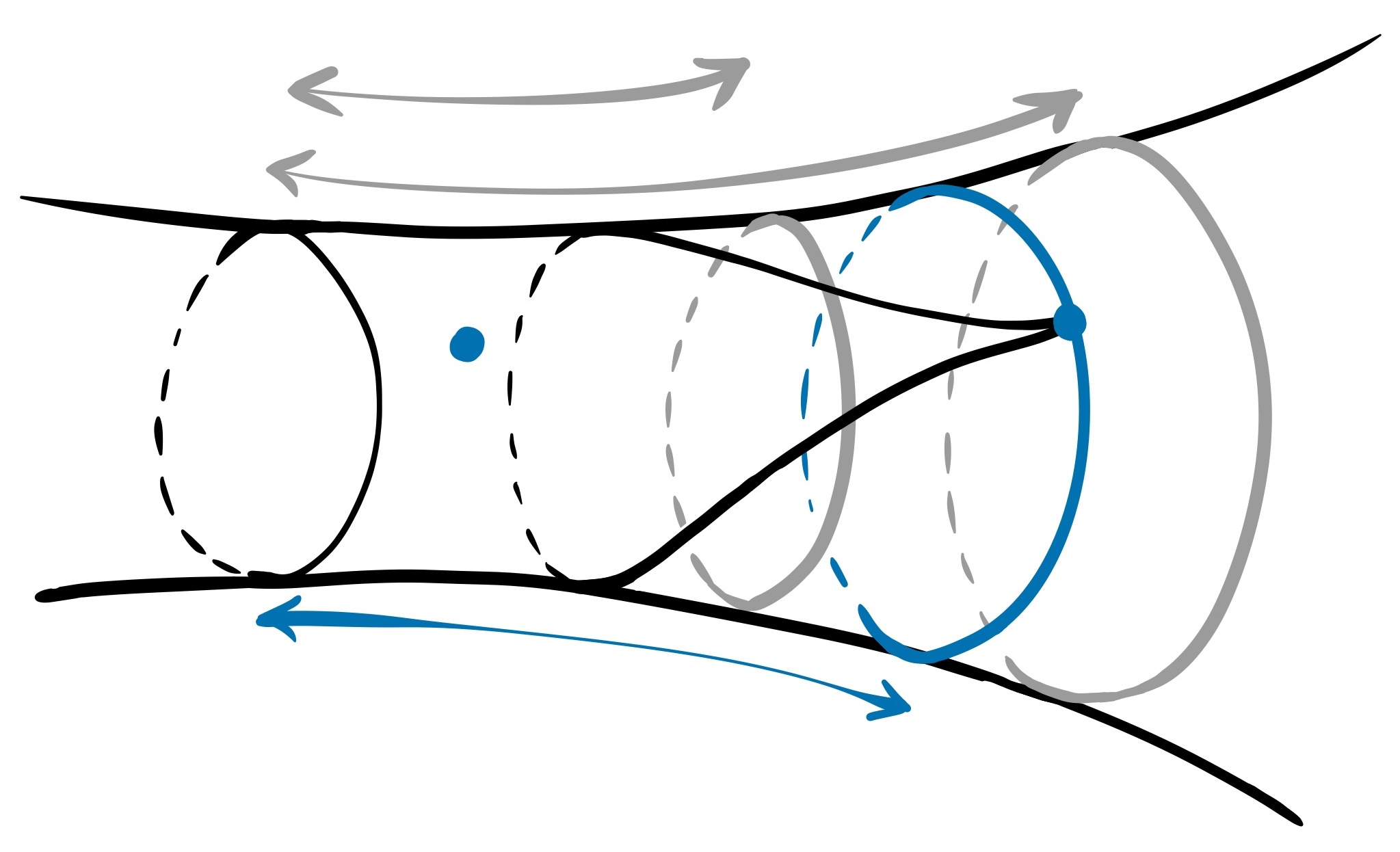}
        \put(29,36){\color{RoyalBlue}$s$}
        \put(78,37){\color{RoyalBlue}$B$}
        \put(22,30){\large$\gamma$}
        \put(59,54){\large\color{gray}$w(\gamma)$}
        \put(32,57){\large\color{gray}$w_{\gamma}^T$}
        \put(37,10){\large\color{RoyalBlue}$w$}
    \end{overpic}\vspace*{-0.25cm}
        \caption{Position of the shear point relative to a collar around a short simple closed geodesic}\label{Case1}
    \end{figure}
    \vspace*{-0.25cm}
\end{proof}

\subsection{An upper bound on the minimax shear of spiralling sriangulations}

\begin{definition}[Spiralling triangulation]
    Let $X \in \teich_{g,n}$ be a hyperbolic surface, and let $(\Gamma, \mathcal{A})$ be a hexagon decomposition of $X$. 
    
    For each simple closed geodesic $\gamma \in \Gamma$, fix an \emph{orientation} of $\gamma$. This orientation determines the direction of \emph{spiralling} around $\gamma$: 
    arcs ending on the left side of $\gamma$ spiral in the same direction as the orientation of $\gamma$, while arcs ending on the right side spiral in the opposite direction.
    
    For each arc $a \in \mathcal{A}$, define a corresponding geodesic arc $a_{\infty}$ as follows:
    \begin{itemize}
        \item[-] If both endpoints of $a$ lie at a cusp, set $a_{\infty} = a$.
        \item[-] If the endpoints of $a$ lie on $\gamma_1, \gamma_2 \in \Gamma$ (possibly $\gamma_1 = \gamma_2$), let $a_{\infty}$ be the geodesic arc homotopic to $a$ that follows $a$ and 
        \emph{spirals around} $\gamma_1$ and $\gamma_2$ according to their chosen orientations.
        \item[-] If one endpoint of $a$ lies in a cusp and the other on $\gamma \in \Gamma$, 
        let $a_{\infty}$ be the geodesic arc homotopic to $a$ that follows $a$ in the cusp and 
        \emph{spirals around} $\gamma$ according to its chosen orientation.
    \end{itemize}
    
    The \emph{spiralling triangulation} associated with $(\Gamma, \mathcal{A})$ is the collection
    \[ \mathcal{T}_{(\Gamma, \mathcal{A})} = \{ a_{\infty} : a \in \mathcal{A} \} \cup \Gamma,\]
    that is, the set of all arcs $a_{\infty}$ together with the curves in $\Gamma$.
    \vspace{-0.2cm}
    \begin{figure}[h]
        \centering
        \includegraphics[scale=0.1]{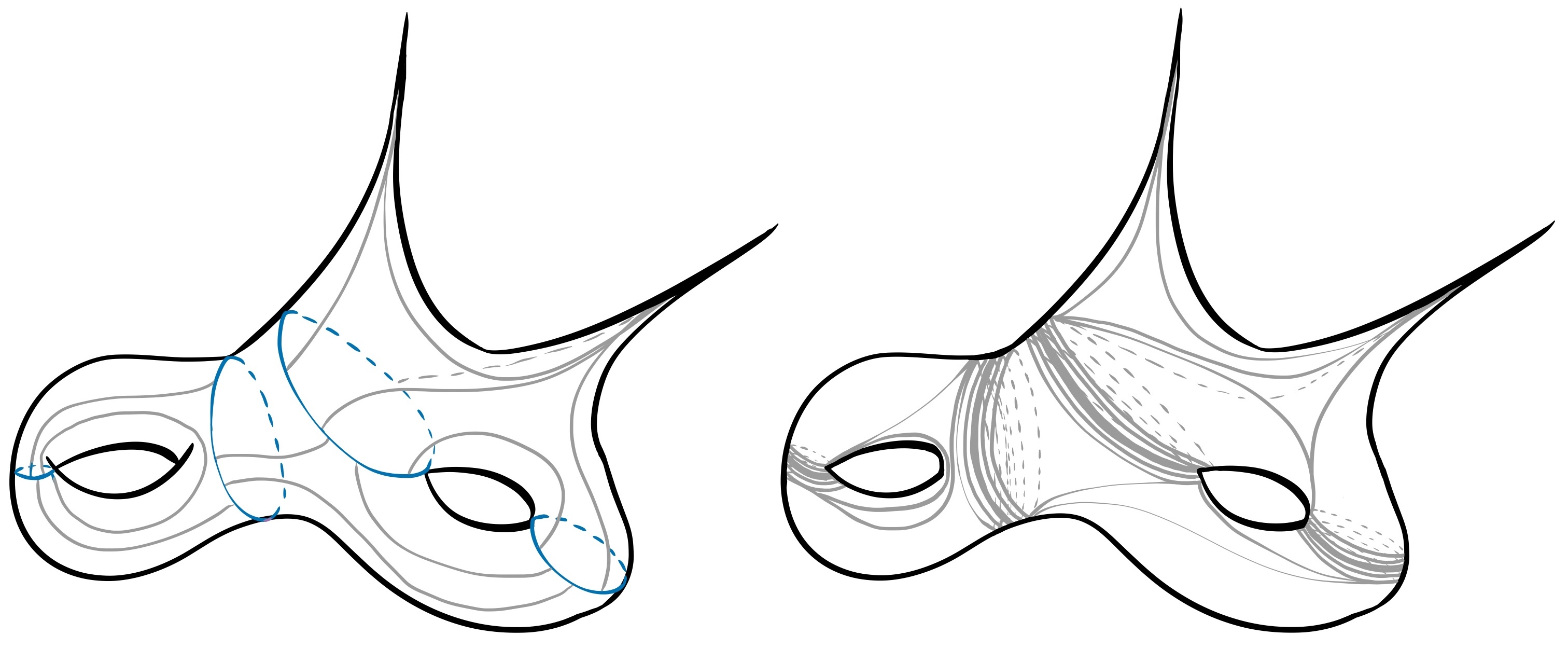}
        \caption{A hexagon decomposition and its associated spiralling triangulation.}
        \label{Spiralling_tirangulation}
    \end{figure} 
    \end{definition}
    
We are now ready to construct an ideal triangulation on $X$ with bounded shear coordinates. 

    \begin{theorem}\label{ShortHexShortShear}
        Let $X$ be a hyperbolic surface of type $(g,n)$ with $2g-2+n>0$ and let $(\Gamma, \mathcal{A})$ be a short hexagon decomposition on $X$. Then, for the spiralling ideal triangulation $\mathcal{T}_{(\Gamma, \mathcal{A})}$ associated with $(\Gamma, \mathcal{A})$, the shear parameters along the arcs $a_{\infty}$ satisfy $|S_{a_{\infty}}^{T}| < 32\cdot\log(8\pi(2g-2+n))+ 23$.
    \end{theorem}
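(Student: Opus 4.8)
The plan is to bound, for each edge $a_\infty$ of $\mathcal{T}_{(\Gamma,\mathcal{A})}$, the shear $|S_{a_\infty}^T|$ by the length of a finite truncation of $a_\infty$ and then to estimate that length with the short hexagon bounds of Proposition~\ref{smallhexagon} and Remark~\ref{smallhex_arcs_length}. Since $S_{a_\infty}^T$ is the signed distance along $a_\infty$ between the two shear points $s,s'$ cut out by the two ideal triangles adjacent to $a_\infty$, one has $|S_{a_\infty}^T|=d_{a_\infty}(s,s')$. First I would fix once and for all a value $\rho'<\rho$, hence the constants $\delta_2,\delta_3$ of Definition~\ref{def:shearfree}, to be optimised at the very end. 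By Proposition~\ref{lem:ShearFree} neither $s$ nor $s'$ lies in a cusp neighbourhood $N_{\delta_2}(c)$ nor in a collar $\mathcal{C}_{w_\gamma^T}(\gamma)$ around a curve with $\ell_X(\gamma)\le 2\tanh(\rho)$; for the remaining curves I will show (last paragraph) that the shear points also stay outside a collar of some uniform width. Granting this, both shear points lie on the compact sub-arc $a_\infty^T\subset a_\infty$ obtained by deleting the parts inside these shear point free regions at the two ends where $a_\infty$ spirals (or runs into a cusp); as $a_\infty$ meets no other curve of $\Gamma$, this sub-arc is connected and $|S_{a_\infty}^T|\le \ell_X(a_\infty^T)$. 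It then suffices to bound $\ell_X(a_\infty^T)$.

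Next I would split $a_\infty^T$ into a central part and at most two end parts. The central part is the portion lying outside the standard collars $\mathcal{C}_{w(\gamma_i)}(\gamma_i)$ and standard cusp neighbourhoods. There $a_\infty$ tracks the orthogeodesic $a$, which meets no curve of $\Gamma$, so I would compare the central part with the truncation $a^t$ of Proposition~\ref{smallhexagon}. The only discrepancy is the transverse rotation produced by the spiralling: $a_\infty$ exits a collar at a point turned around $\gamma_i$ relative to where $a$ exits, but this shift is at most the collar-boundary length $\ell_X(\gamma_i)\cosh(w(\gamma_i))$, which stays near $2$ for short curves and grows like $\ell_X(\gamma_i)\le 2\log(4\area(X))$ for long ones. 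Hence the central part is controlled by $\ell_X(a^t)\le 6\log(4\area(X))$ together with a bounded number of such turns around each endpoint curve; this is the source of the growth of the leading coefficient from $6$ toward the stated $32$.

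For each end part I would pass to Fermi coordinates around $\gamma_i$ (respectively to the cusp coordinates $d\rho^2+e^{2\rho}dt^2$). Lifting the spiralling ray to the universal cover, it becomes a semicircle landing at the attracting fixed point of $\gamma_i$, and a direct computation of $\int ds$ gives that the length of the spiral between the equidistant curves at signed distances $d_1>d_2$ from $\gamma_i$ equals $\log\!\bigl(\sinh(d_1)/\sinh(d_2)\bigr)$. Taking $d_1=w(\gamma_i)$ and $d_2$ equal to the width at which the end is truncated bounds each end part in terms of $\ell_X(\gamma_i)$ and the fixed constants $\rho,\rho',\delta_2,\delta_3$; in a cusp the corresponding bound is $\log(2/\delta_2)$. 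Summing the central and the two end contributions, substituting $\area(X)=2\pi(2g-2+n)$ so that $4\area(X)=8\pi(2g-2+n)$, and collecting the coefficients of $\log(8\pi(2g-2+n))$ together with the additive constants then yields $|S_{a_\infty}^T|<32\log(8\pi(2g-2+n))+23$, the choice of $\rho'$ being made here to absorb the constants into $23$.

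The hard part will be the uniform control of the end parts, that is, of how deep into a collar the shear points and the truncation may descend. Proposition~\ref{lem:ShearFree} only keeps the shear points out of $\mathcal{C}_{w_\gamma^T}(\gamma)$, and the spiral length $\log(\sinh(w(\gamma))/\sinh(w_\gamma^T))$ degenerates as $\ell_X(\gamma)$ approaches the threshold where $w_\gamma^T\to 0$; moreover for $\ell_X(\gamma)>2\tanh(\rho)$ there is no shear point free collar at all. The key point I would have to establish is that, for every curve an edge spirals into, the shear point stays at a distance from $\gamma$ bounded below by a universal constant. Geometrically this says the inscribed circle of radius $2\rho$, being tangent also to the two other edges of the adjacent triangle, cannot slide arbitrarily far into a collar. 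For short curves this follows from comparing $2\rho$ with the collar width via Lemma~\ref{lem:InjRadiusLengthCusp} and the Collar Theorem~\ref{CollarThm}; for the remaining curves one must instead use that the other ideal vertices of the two adjacent triangles occupy positions controlled by the short hexagon decomposition, so that the shear, being a cross-ratio of the four vertices, is bounded. Once this uniform depth bound is in place the end parts are genuinely finite and the remaining constant bookkeeping is routine.
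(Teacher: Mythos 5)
Your strategy---bound $|S_{a_\infty}|$ by the length of a truncation $a_\infty^T$ cut off at shear-point-free regions---is the one sketched in the paper's introduction, and it is exactly how the paper handles the arcs ending in two cusps. But for spiralling arcs it has a gap that you flag yourself and never close, and that gap is the crux of the theorem. Proposition~\ref{lem:ShearFree} only provides shear-point-free collars around curves with $\ell_X(\gamma)\le 2\tanh(\rho)$, and its mechanism (a point $B$ at distance $\rho$ from the shear point has injectivity radius $\ge\rho$, contradicting the collar geometry) demonstrably gives nothing for longer curves, where the injectivity radius near $\gamma$ is too large to produce any contradiction; so for the ``intermediate'' and ``long'' curves your truncation $a_\infty^T$ is simply not defined. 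Even for short curves near the threshold, $w_\gamma^T\to 0$ and your end-part estimate $\log\!\bigl(\sinh(w(\gamma))/\sinh(w_\gamma^T)\bigr)$ blows up, as you note. Your proposed repair for the non-short curves---that ``the shear, being a cross-ratio of the four vertices, is bounded'' because the vertices are ``controlled by the short hexagon decomposition''---is circular: boundedness of that cross-ratio \emph{is} the statement to be proved, and no mechanism is offered for converting hexagon-length bounds into control of ideal vertex positions. (Also, Lemma~\ref{lem:InjRadiusLengthCusp} concerns cusp horocycles, not collars, so it cannot play the role you assign it for short curves; the collar case is already what Proposition~\ref{lem:ShearFree} does.)

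The paper's proof avoids locating the shear points along the spiral altogether, which is how it escapes this trap. For an arc $a$ not ending in two cusps, it uses the point $x_{a,a_\infty}=a\cap a_\infty$ (which exists because $a_\infty$ spirals infinitely around an end curve and must cross $a$), and estimates the shear along the chain $s^1\to x_0^1\to x_{a,a_\infty}\to x_0^2\to s^2$ through the incenters $x_0^i$ of the two adjacent triangles: $d(s^i,x_0^i)=2\rho$ exactly, while $d(x_0^i,x_{a,a_\infty})$ is bounded by two claims---every point of $\hat H_i\cap\Delta_i$ lies within $D_{g,n}=16\log(4\area(X))+8.7$ of all three sides of $\Delta_i$ (proved by walking along at most four sides of the truncated hexagon $\hat H_i$, whose perimeter is controlled by Proposition~\ref{smallhexagon} and Remark~\ref{smallhex_arcs_length}), and every such point lies within $D_{g,n}+2\rho$ of the incenter. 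In particular the leading coefficient $32=2\times 16$ comes from traversing the hexagon boundary twice, once per triangle, not from the ``transverse rotation'' corrections in your central-part accounting. To rescue your approach you would need a shear-point-free collar of uniform width around \emph{every} curve that a triangulation arc spirals into, with uniformly bounded truncated spiral length; nothing in the paper (or in your sketch) establishes this, which is precisely why the detour through the incenters is needed.
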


    \begin{proof}
        Let $X$ be a hyperbolic of type $(g,n)$ with $2g-2+n>0$, let $(\Gamma, \mathcal{A})$ be a short hexagon decomposition on $X$; Recall that by definition for all $\gamma \in \Gamma$, $\ell_X(\gamma)\leq  2\log(4\area(X))$. Moreover the lengths of the orthogeodesic arcs $a\in \mathcal{A}$ are bounded above, see Remark~\ref{smallhex_arcs_length}. Let $\mathcal{T}_{(\Gamma, \mathcal{A})}$ be the spiralling triangulation associated with $(\Gamma, \mathcal{A})$.

        Firstly, we bound the shear along arcs $a=a_{\infty}\in \mathcal{A}$ ending in two cusps $c_1$ and $c_2$. Recall that, by Remark~\ref{smallhex_arcs_length}, \[\ell_X(a^t)\leq 6\log(4\area(X)),\] where $a^t$ is the subarc of the orthogeodesic arc $a$ that leaves from the horocycles of length 2 in the cusp neighborhoods.
        Then, by Proposition~\ref{lem:ShearFree}, the shear along $a$ cannot occur in the cusp neighborhoods $N_{\delta_2}(c_i)$ for $i=1,\ldots,n$. Let $a^T$ be the subarc of $a$ that leaves from the horocycles of length $\delta_2$ in the cusp neighborhoods. Then \[ |S^{\mathcal{T}_{(\Gamma, \mathcal{A})}}_{a}| \leq \ell_X(a^T)= \ell_X(a^t) + 2 \cdot \log\left(\frac{2}{\delta_2}\right) \leq 6\log(4\area(X)) +2\log\left(\frac{2}{\delta_2}\right). \]

        Finally, we want to bound the shear along arcs in $\mathcal{T}_{(\Gamma, \mathcal{A})}$ corresponding to orthogeodesic arcs in $\mathcal{A}$ that do not end in two cusps. Consider a hexagon $H$ in $(\Gamma, \mathcal{A})$ whose orthogeodesic arcs do not all end in two cusps, and the corresponding ideal triangle $\Delta$ in $\mathcal{T}_{(\Gamma, \mathcal{A})}$. Next, define $\hat H$ as follows: if all the orthogeodesic arcs in $H$ end on curves of lengths greater than $2\tanh\left(\rho\right)$, then $\hat H$ is equal to $H$. Otherwise, $\hat H$ is obtained from $H$ by removing the intersection of $H$ with cusp neighborhoods $N_{\delta_2}(c)$ and collars $\mathcal{C}_{w_{\gamma}^T}$ for each curve $\gamma$ of length $\leq 2\tanh\left(\rho\right)$. See Figure~\ref{Figure30}. We need the two following claims to conclude the proof.

        \begin{claim}\label{claimA}
            Let $x\in \hat H \cap \Delta$ then \[ d(x, \partial \Delta) < D_{g,n}, \] where $D_{g,n}\coloneq 16\log(4\area(X)) + 8.7$ and $\partial \Delta$ denotes the set of edges of $\Delta$.
        \end{claim}

        \begin{proofclaim}
                  
            Let $x\in \hat H \cap \Delta$, and denote by $a,b,c$ the orthogeodesic arcs in $H$ and by $a_\infty, b_\infty, c_\infty$ the corresponding edges in $\Delta$. The radius of the inscribed circle in an ideal triangle is $2\rho$. If $x$ is the center of the inscribed circle of $\Delta$, then $d(x,\partial T)=2\rho$. 
            
            Otherwise, $x$ lies in a spike of $\Delta$ delimited by two radii of the inscribed circle and two edges of $\Delta$. Without loss of generality, assume that $a_\infty$ is the closest side to $x$ and that $x$ lies in the spike determined by $a_\infty$ and $b_\infty$ (see Figure~\ref{hatHbounded}). In an ideal triangle every point lies within distance $2\rho$ of its closest side, and any point in such a spike is at distance at most twice the radius of the inscribe circle from each of the two sides delimiting it. Hence $d(x,a_\infty) \leq 2\rho$ and $d(x,b_\infty)\leq 4\rho$. So what remains is to bound $d(x,c_\infty)$. 

            Observe that the hexagon $\hat H$ has its sides of bounded length (by construction and by Remark~\ref{smallhex_arcs_length}) and so $\hat H \cap \Delta$ also has its sides of bounded lengths. Hence $\hat H \cap \Delta$ does not contain a spike of $\Delta$ and there exists a subarc $e $ from $\partial \hat H$ traversing the spike of $\Delta$ delimited by $a_\infty$ and $b_\infty$, see Figure~\ref{hatHbounded}. And so for some $y\in e$ we have $d(x,c_\infty)\leq d(y,c_\infty)$. 

            \begin{figure}[h]
                \centering
                \begin{overpic}[width=.5\linewidth,keepaspectratio]{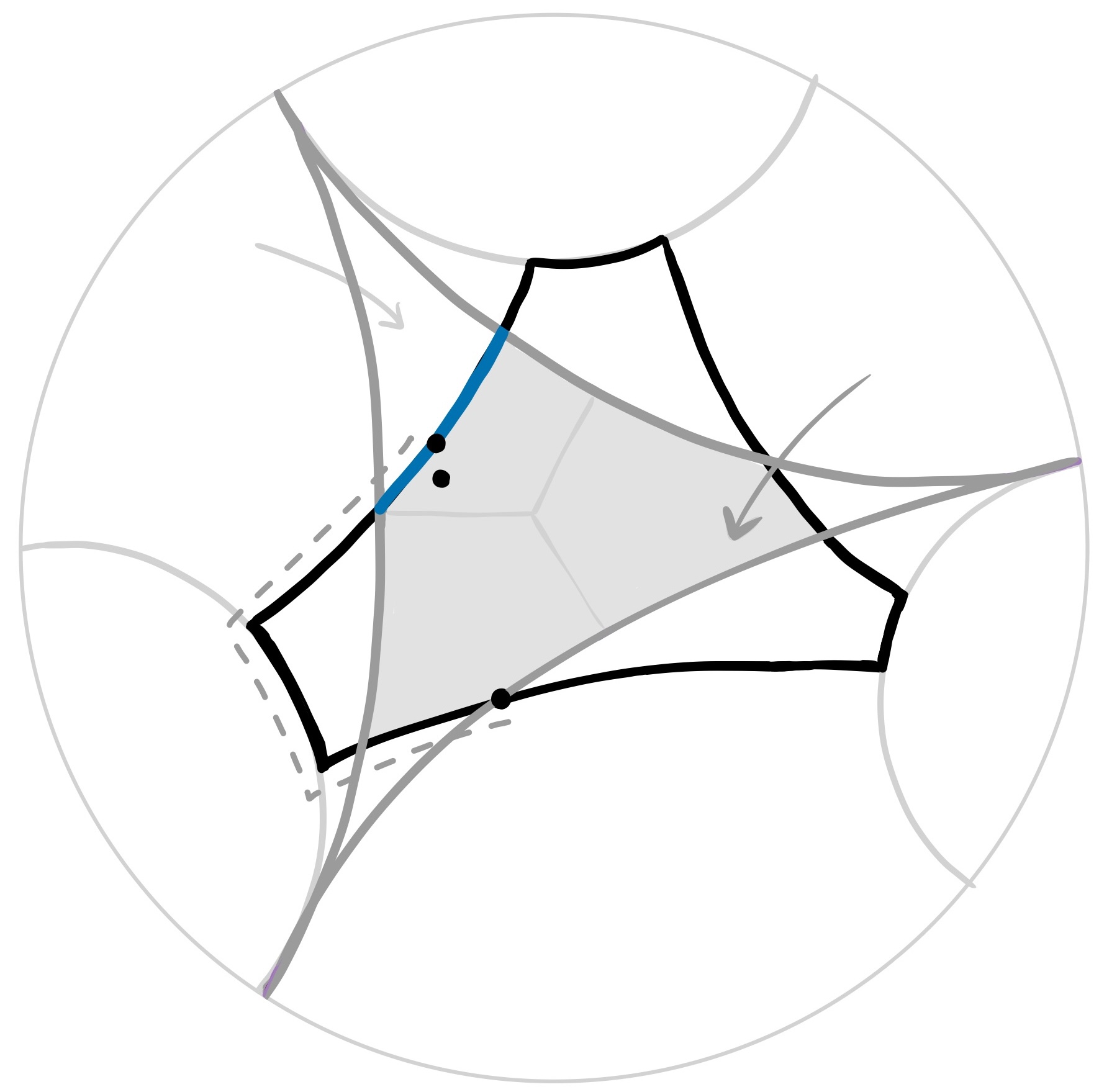}
                \put(38,62.5){$y$}
                \put(42,55){$x$}
                \put(43.5,62){\color{RoyalBlue}$e$}
                \put(73,66){\small\color{Gray}$\hat{H}\cap \Delta$}
                \put(14,78){\color{Gray}\text{spike}}
                \put(26.5,39){\small\color{Gray}$a_{\infty}$}
                \put(54,64.5){\small\color{Gray}$b_{\infty}$}
                \put(61,42){\small\color{Gray}$c_{\infty}$}
                \end{overpic}\vspace*{-0.25cm}
                \caption{Intersection between a hexagon and its corresponding ideal triangle}
                \label{hatHbounded}
            \end{figure}

            If $c_\infty$ intersects $\hat H$, recall that $\hat H$ is an hexagon thus to travel from $y\in \partial \hat H$ to $c_\infty \cap \hat H$ we travel along at most four sides of $\hat H$. The sides of $\hat H$ alternate between two types of side:

                 \begin{enumerate}
                    \item The first type corresponds to arcs or truncated arcs from $\mathcal{A}$ in $H$.
                    By Remark~\ref{smallhex_arcs_length} and Proposition~\ref{lem:ShearFree}, the lengths of the arcs in 
                    $\partial\widehat H$ are bounded above by
                    \[
                    6\log(4\,\area(X)) + C,
                    \]
                    where the constant $C$ depends on the endpoints of each arc $a\in\mathcal{A}$. To compute $C$  we proceed by a case-by-case analysis. A curve $\gamma \in \Gamma$ is called {
                        \renewcommand{\labelitemi}{}
                        \begin{itemize}
                        \item \emph{short}: if $\ell_X(\gamma)\le 2\tanh(\rho)$
                        \item \emph{intermediate}: if $2\tanh(\rho)<\ell_X(\gamma)\le 2\arcsinh(1)$
                        \item \emph{long}: otherwise, that is if $\ell_X(\gamma)>2\arcsinh(1)$
                      \end{itemize}
                      } 
                      We distinguish the following cases, according to whether the endpoints of $a$ are cusps or lie on curves in $\Gamma$  \begin{itemize}
                        \item If both endpoints are cusps, then $C = 2\log\!\left(\frac{2}{\delta_2}\right)$.
                      
                        \item If one endpoint lies on a cusp and the other on a curve $\gamma\in\Gamma$:
                        \begin{itemize}
                          \item if $\gamma$ is short, then
                          $
                            C = \log\!\left(\frac{2}{\delta_2}\right)
                                + \bigl(w(\gamma)-w_\gamma^T\bigr).
                          $
                      
                          \item if $\gamma$ is intermediate, then
                          $
                            C = \log\!\left(\frac{2}{\delta_2}\right)
                                + w\!\bigl(2\tanh(\rho)\bigr).
                          $
                      
                          \item if $\gamma$ is long, then
                          $
                            C = \log\!\left(\frac{2}{\delta_2}\right).
                          $
                        \end{itemize}
                      
                        \item If both endpoints lie on curves $\gamma_1,\gamma_2\in\Gamma$:
                        \begin{itemize}
                          \item if both curves are short, then
                          $
                            C = \bigl(w(\gamma_1)-w_{\gamma_1}^T\bigr) + \bigl(w(\gamma_2)-w_{\gamma_2}^T\bigr).
                          $
                      
                          \item if both curves are intermediate, then 
                          $
                            C = 2\,w\!\bigl(2\tanh(\rho)\bigr).
                          $

                          \item If both curves are long, then $C=0$
                      
                          \item Mixed regimes:
                            \begin{itemize}
                            \item If $\gamma_1$ is long and $\gamma_2$ is short, then
                            $C = w(\gamma)-w_\gamma^T.$
                    
                            \item If $\gamma_1$ is long and $\gamma_2$ is intermediate, then
                            $
                                C = w\!\bigl(2\tanh(\rho)\bigr).
                            $
                    
                            \item If $\gamma_1$ is short and $\gamma_2$ is intermediate, then\vspace{-0.15cm} \[ C = (w(\gamma_1)-w_{\gamma_1}^T) + w(2\tanh(\rho)).\]
                            \end{itemize}
                        \end{itemize}
                      \end{itemize}
                    Observe that \[ w(\gamma) - w_{\gamma}^T \le \operatorname{arcsinh}\!\left(\frac{1}{\sinh(\tanh(\rho))}\right) < 2.02.\] Moreover, $2\tanh(\rho) < 0.536.$ and for a fixed $\rho'<\rho$ sufficiently small, we have \[\log\!\left(\frac{2}{\delta_2}\right) < 1.5545  \]Therefore, \[C \le 4.04.\]

                    \item The second type is given by subarcs of curves in $\Gamma$, subarcs of boundary curves of cusp neighbourhoods and subarcs of boundary curves of collars $\mathcal{C}_{w_\gamma^T}$ around curves $\gamma \in \Gamma$ of lengths $\leq 2\tanh\left(\rho\right)$. Recall that by Proposition~\ref{smallhexagon}, the lengths of the curves in $\Gamma$ are bounded above by $2\log(4\area(X))$. Next the boundary lengths of the cusp neighborhoods are, by construction, bounded above by $\delta_2$. Finally, let $\gamma \in \Gamma$ such that $\ell_X(\gamma)\leq 2\tanh\left(\rho\right)$. In this case, for a fixed sufficiently small $\rho'$, the boundary curve of the collar $\mathcal{C}_{w_\gamma^T}$ has length $\ell_X(\gamma)\cosh(w_\gamma^T)<0.54$
                \end{enumerate}
                
                Recall that to travel from $y\in \partial \hat H$ to $c_\infty \cap \hat H$ we travel along at most four sides of $\hat H$. Thus we obtain a bound on the distance from $y\in \partial \hat H$ to $c_\infty \cap \hat H$ and we deduce that \[d(x,c_\infty)\leq d(y,c_\infty) \leq 16\log(4\area(X)) + 8.08. \]
        
        Next, we consider the case when $c_\infty$ does not intersect $\hat H$. Observe that if $c$ ends in two cusps then $c=c_\infty$ so $c$ must intersect $\hat H$ (see Figure~\ref{Figure30} on the left). Hence in the case under consideration $c$ does not end in two cusps. This implies that the corresponding arc $c_\infty$ must intersect $H$. Indeed, $c$ and another arc of $H$, say $a$, must end on at least one curve $\gamma \in \Gamma$. Then, since $c_\infty$ spiralls infinitely many times around $\gamma$, it must intersect $c$ and $a$. Recall that $c_\infty\cap \hat H=\emptyset$ so the end curve $\gamma$ must have length $\leq 2\tanh\left(\rho\right)$ (see the right side of Figure~\ref{Figure30}). Let $\mu$ be the subarc of the boundary curve of $\mathcal{C}_{w_\gamma^T}$ in $\hat H$.
        \begin{figure}[h]
            \centering
            \begin{overpic}[width=.85\linewidth,keepaspectratio]{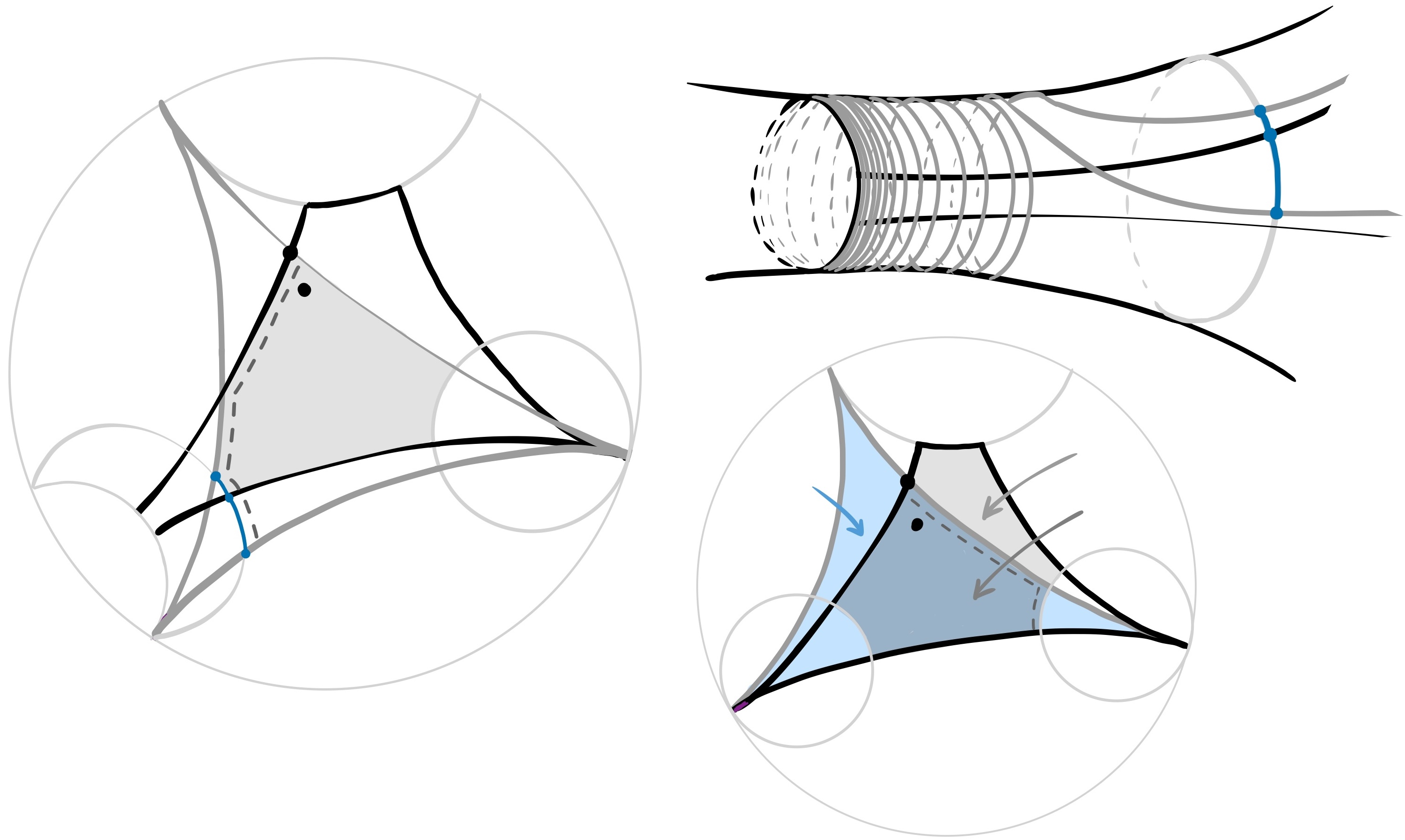}
            \put(15,21){\color{RoyalBlue}$\mu$\color{black}}
            \put(9,25.5){$a$}
            \put(23,27.5){$c$}
            \put(18,41){$y$}
            \put(22,37.5){$x$}
            \put(4,31){\color{Gray}$\partial \mathcal{C}_{w_\gamma^T}$\color{black}}
            
            \put(95,51.5){$a$}
            \put(93,41.5){$c$}
            \put(96.5,54){\color{Gray}$a_{\infty}$}
            \put(98,45.5){\color{Gray}$c_{\infty}$}
            \put(88,47){\color{RoyalBlue}$\mu$\color{black}}

            \put(65.5,20.5){$x$}
            \put(62,25){$y$}
            \put(77,27){\color{Gray}$\hat H$}
            \put(77.25,23){\color{Gray}$\hat H \cap \Delta$}
            \put(55,25){\color{RoyalBlue}$\Delta$}
            \put(65,11){\color{Gray}$c=c_{\infty}$\color{black}}
            
            \end{overpic}
            \vspace{-0.3cm}
            \caption{Bounding shears using intersections with cusp and collar boundaries}
            \label{Figure30}
        \end{figure} 
        Then, by proceding as in the discussion of the first case when $c_\infty \cap \hat H \neq \emptyset$, we obtain that, for some $y \in \partial \hat H$,  \[d(x,\mu)\leq d(y,\mu) \leq 16\log(4\area(X)) + 8.08, \] and since the boudary curve of $\mathcal{C}_{w_\gamma^T}$ intersects $c_\infty$ and has length $< 0.54$ we deduce that \[d(x,c_\infty)\leq d(y,c_\infty) < 16\log(4\area(X)) + 8.7. \] This concludes the proof of Claim~\ref{claimA}.
        \end{proofclaim}

        \begin{claim}\label{claimB}
            Let $x\in \Delta$ such that $d(x,\partial \Delta) \leq D$ then \[ d(x_0, x)\leq D + 2\rho, \] where $x_0$ is the center of the inscribed circle in $\Delta$.
        \end{claim}

        \begin{proofclaim}
            Let $x \in \Delta$ with $d(x, \partial \Delta) \le D$ and recall the the inscribed circle of $\Delta$ has radius $2\rho$.
            Then $x$ lies in one spike of $\Delta$, bounded by two radii of the inscribed circle and two sides of $\Delta$, say $a_\infty$ and $b_\infty$.  
            
            Suppose first that $d(x, c_\infty) = D$. Choose a point $y$ in the same spike as $x$, lying on the height of the triangle orthogonal to $c_\infty$ and passing through the center $x_0$, such that $d(y, c_\infty) = D$. 
            Then
            \[d(x, x_0) 
                \le d(x, y) + d(y, x_0) 
                \le 2\cdot 2\rho + D - 2\rho 
                = D + 2\rho.
            \]
            
            If instead $d(x, c_\infty) < D$, a similar argument gives
            \[d(x, x_0) < D + 2\rho.\] Thus the claim follows.
            \end{proofclaim}
    
        Recall that we want to bound the shear along edges corresponding to arcs that do not end in two cusps. Let $H_1$ and $H_2$ be two hexagons sharing a common edge $a$ in $(\Gamma, \mathcal{A})$ which does not end in two cusps. Let $\Delta_1$ and $\Delta_2$ be the coresponding ideal triangles in $\mathcal{T}_{(\Gamma, \mathcal{A})}$ sharing the edge $a_{\infty}$. Fix a lift of $H_1$, $H_2$, $\Delta_1$ and $\Delta_2$ in the universal cover, and, by abuse of notation, denote these lifts by the same symbols. Note that the intersection point $a_{\infty}\cap a$ exists since $a_{\infty}$ spiral infinitely many times around at least one ending curve and so intersects $a$. Let $s_{a_{\infty}}^1$ be the shear point of $\Delta_1$ along $a_{\infty}$ and let $s_{a_{\infty}}^2$ be the shear point associated with $\Delta_2$. Let $x_0^1$ (resp. $x_0^2$) be the center of the inscribed circle associated with $\Delta_1$ (resp. $\Delta_2$).

        First assume that the intersection point of $a$ and $a_{\infty}$ lies in $\hat H_1$ and $\hat H_2$, we denote it by $x_{a,a_{\infty}}$ (see the right hand side of Figure~\ref{Figure34}). Then we have the following inequality:
        \begin{align*}
            |S_{\mathcal{T}_{(\Gamma, \mathcal{A})}}^{a_\infty}(X)| &= d(s_{a_{\infty}}^1, s_{a_{\infty}}^2)\leq d(s_{a_{\infty}}^1, x_{a,a_{\infty}}) + d(x_{a,a_{\infty}}, s_{a_{\infty}}^2)\\ &\leq d(s_{a_{\infty}}^1, x_0^1)+d(x_0^1, x_{a,a_{\infty}})+d(x_{a,a_{\infty}}, x_0^2)+d(x_0^2, s_{a_{\infty}}^2)
        \end{align*}
        By definition of the shear points one has $d(x_0^i, s_{a_{\infty}}^i)=2\rho$. Since $x_{a,a_{\infty}}\in \hat H_i \cap \Delta_i$ for $i=1,2$, by Claims~\ref{claimA} and~\ref{claimB} we obtain \[|S_{\mathcal{T}_{(\Gamma, \mathcal{A})}}^{a_\infty}(X)| < 2\rho + 2\rho + 2(D_{g,n}+2\rho) < 32\cdot\log(4\area(X))+ 19.6 .\]
        
        \begin{figure}
            \centering
            \begin{overpic}[width=1.0\linewidth,keepaspectratio]{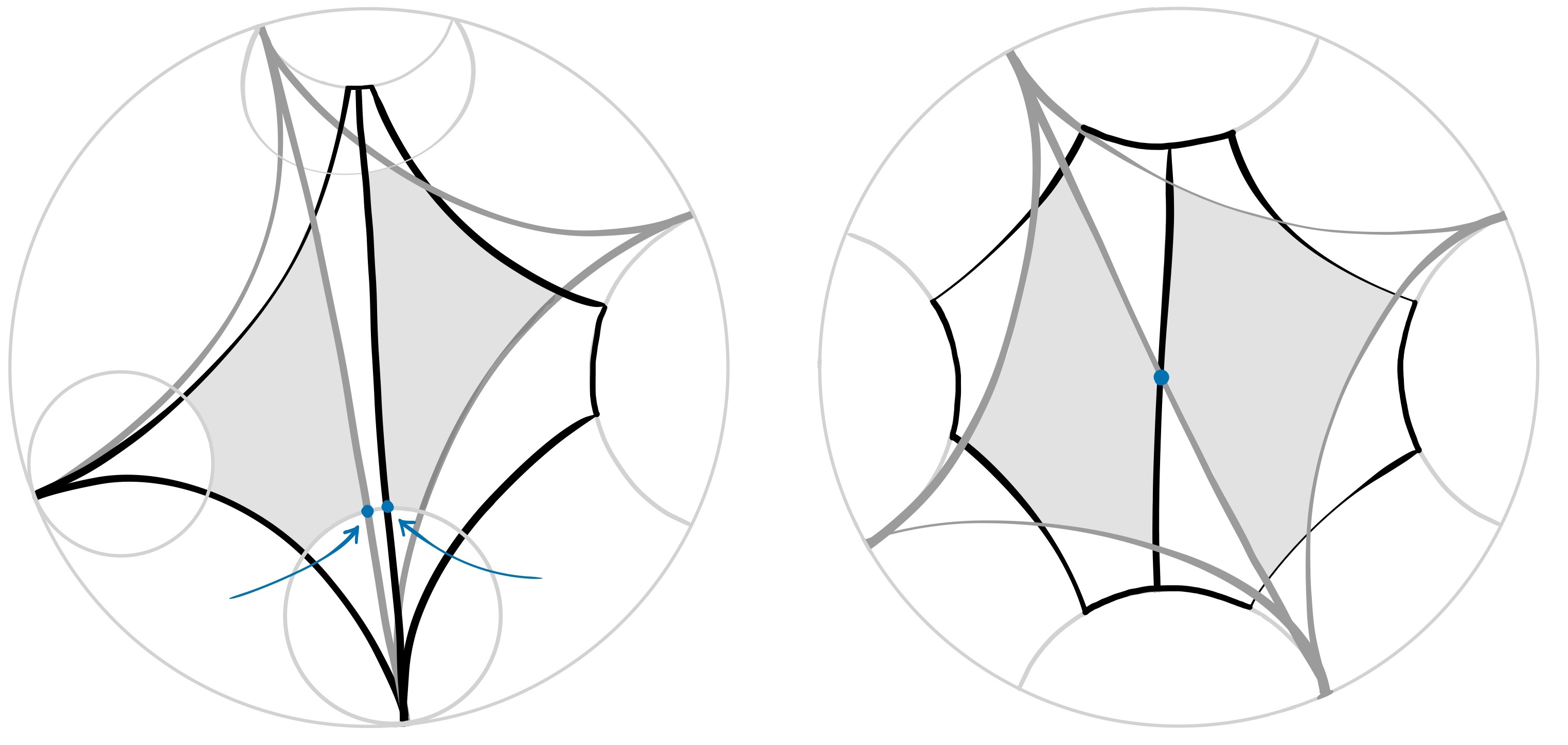}
            \put(13,19){\small\color{Gray}$\Delta_1 \cap \hat H_1$}
            \put(24.5,28){\small\color{Gray}$\Delta_2 \cap \hat H_2$}
            \put(34.5,10){\small\color{RoyalBlue}$a \cap \hat H_2$}
            \put(5,9){\small\color{RoyalBlue}$a_{\infty} \cap \hat H_1$}
            \put(70.5,32){\color{Gray}$a_{\infty}$}
            \put(72,17){$a$}
            \put(75,24){\small\color{RoyalBlue}$x_{a,a_{\infty}}$}
            \end{overpic}\vspace*{-0.5cm}
            \caption{Bounding shear with truncated hexagons}
            \label{Figure34}
        \end{figure}
        
        Otherwise assume that the intersection point of $a$ and $a_{\infty}$ does not lie in either $\hat H_1$ or $\hat H_2$. This means that $a$ has (at least) one side ending in a cusp or a curve $\gamma \in \Gamma$ with length $\leq 2\tanh\left(\rho\right)$. In this case, we consider the intersection points $a_{\infty}\cap \hat H_i$ and $a\cap \hat H_j$, without loss of generality, we fix $i=1$ and $j=2$ (see the left side of Figure~\ref{Figure34}). Observe that by construction $a_{\infty}\cap \hat H_1$ and $a\cap \hat H_2$ lie on $\eta= \partial N_{\delta_2}(c)$ or $\partial \mathcal{C}_{w_\gamma^T}$. In particular, $a_{\infty}\cap \hat H_1$ and $a\cap \hat H_2$ correspond to points lying on a subarc of the boudary curve $\eta$ from $\eta \cap H_1$ to $\eta \cap H_2$. Hence we obtain: \begin{equation}\label{dis_intersec_pt}
            d(a_{\infty}\cap \hat H_1, a\cap \hat H_2)\leq \left\{
                \begin{array}{ll} 
                    \ell_X(\eta) = \delta_2 \mbox{ if } a \text{ ends in a cusp} \\
                    \ell_X(\gamma)\cosh(\gamma)< 0.54 \mbox{ if } a \text{ ends on }\gamma
                \end{array}
                \right.\\
        \end{equation}

        We deduce the following inequality:
        \begin{align*}
            |S_{\mathcal{T}_{(\Gamma, \mathcal{A})}}^{a_\infty}(X)| &= d(s_{a_{\infty}}^1, a_{\infty}\cap \hat H_1) + d(a_{\infty}\cap \hat H_1, a\cap \hat H_2) + d(a\cap \hat H_2, s_{a_{\infty}}^2)\\ &\leq d(s_{a_{\infty}}^1, x_0^1) + d(x_0^1,a_{\infty}\cap \hat H_1) + d(a_{\infty}\cap \hat H_1, a\cap \hat H_2)\\ &+ d(a\cap \hat H_2, x_0^2) +d(x_0^2, s_{a_{\infty}}^2)
        \end{align*}

        By definition, $d(x_0^i, s_{a_{\infty}}^i)=2\rho$. Recall that $a_{\infty} \cap \hat H_1$, resp. $a \cap \hat H_2$, lies in $\hat H_1 \cap \Delta_1$, resp. $\hat H_2 \cap \Delta_2$. By combining the two claims~\ref{claimA} and~\ref{claimB} together with~(\ref{dis_intersec_pt}), we obtain that \[|S_{\mathcal{T}_{(\Gamma, \mathcal{A})}}^{a_\infty}(X)| < 32\cdot\log(4\area(X))+ 23 .\]
\end{proof}

Let $X$ be a hyperbolic surface of type $(g,n)$ with $2g-2+n>0$. By Proposition~\ref{smallhexagon} $X$ admits a short hexagon decomposition $(\Gamma, \mathcal{A})$. Then Theorem~\ref{ShortHexShortShear} implies that the shear parameters along the arcs of the spiralling triangulation associated with $(\Gamma, \mathcal{A})$ are bounded above. This yields an upper bound on the minimax shear completing the proof of Theorem~\ref{thm:main} stated in the introduction.

\bibliographystyle{plain}
\bibliography{bib.bib}
\end{document}